\newtheorem{theorem}{Theorem}
\newtheorem{lemma}[theorem]{Lemma}
\newtheorem{proposition}[theorem]{Proposition}
\newenvironment{proof}[1][Proof]{\noindent\textbf{#1.} }{\ \rule{0.5em}{0.5em}}
\begin{document}

\begin{center}
{ \Large
Normal forms for three-parametric families of 
area-preserving maps near an elliptic fixed point\\
}
\end{center}

\begin{center}
Natalia Gelfreikh
\end{center}
\begin{center}
\small Department of Higher Mathematics and Mathematical Physics,
\end{center}

\begin{center}
Saint-Petersburg State University, Russia
\end{center}
\begin{center}
E-mail: st004254@spbu.ru
\end{center}


\begin{abstract}
We study dynamics of area-preserving maps in a neighbourhood
of an elliptic fixed point. We describe simplified normal forms
for a fixed point of co-dimension 3. We also construct normal
forms for a generic three-parameter family 
which contains such degeneracy and use the normal
form theory to describe generic bifurcations
of periodic orbits in these families.

\end{abstract}

\section{Introduction}

In a Hamiltonian system small oscillations around a periodic orbit 
are often described using the normal form theory~\cite{AKN,SM} which provide an important tool for the study of local
dynamics (see
e.g.~\cite{AKN,Bryuno,Kuznetsov}).
 In the case of two degrees of freedom
the Poincar\'e section is used to reduce the problem 
to studying a family of area-preserving maps in a neighbourhood of a fixed point. The Poincar\'e map depends 
on the energy level and possibly on other parameters involved in  the problem. 
A sequence of coordinate changes is used 
to transform the map to a normal form. 
Our approach to the normal form of a map is similar to \cite{Tak74}.

In the absence of resonances 
the normal form is a rotation of the plane, and the angle
of the rotation depends on the amplitude.
In a generic one-parameter family of area-preserving maps,
 the normal form provides a description for a chain of islands which is born from the
origin when the multiplier of the fixed point crosses a resonant 
value~\cite{AKN,SM,Meyer1970,SV2009,GG2009}.

In \cite{GG2014}
unique normal forms for two-parametric families were constructed and used to analyse bifurcations of $n$-periodic orbits.  
 
In the paper we study three-parametric families of APM 
with fixed point of elliptic type.
In such families two possibilities
of degenerations are possible:

(1) degenerations in the twist terms;

(2) degeneration in the leading resonance term.

Normal form in the case of (1) was constructed in \cite{GG2014}
(for arbitrary number of parameters). Normal forms for 
(2) are constructed in the paper first for an individual map (Section~\ref{Se:hn0is0}) and then for families
(Section~\ref{Se:Familieshn0}). In Sections~\ref{Se:biflead}
and  \ref{Se:Familiesh33} the normal forms are used 
to inverstigate biffurcations.

\subsection{Individual maps}

Let $F_0:\mathbb{R}^{2}\to \mathbb{R}^{2}$ be an area-preserving map (APM) which also
preserves orientation. Let the origin be a fixed point:
$
F_0(0)=0.
$
Since $F_0$ is area-preserving $\det DF_0(0)=1$. Therefore
the two eigenvalues of the Jacobian matrix  
$DF_0(0)$ are $\lambda_0 $ and $\lambda_0 ^{-1}$.
We will consider an elliptic fixed point, i.e. the case 
  of non real $\lambda_0$.
As the map is real $\lambda^{-1}_0=\lambda_0^*$.
Consequently the multipliers of an elliptic fixed point
belong to the unit circle: $|\lambda_0|=1$, i.e. $\lambda_0 =
e^{i\alpha_0}$. 

There is a linear area-preserving change of variables
such that the Jacobian of $F_0$ takes the form of
a rotation:
\begin{equation}\label{Eq:rotation}
DF_0(0)=R_{\alpha_0 },\qquad\mbox{where }R_{\alpha_0}=\left(
\begin{array}{cc}
\cos \alpha_0 & -\sin \alpha_0 \\
\sin \alpha_0 & \cos \alpha_0%
\end{array}%
\right).
\end{equation}

It is well known that APM with elliptic fixed point can be
represented in Birkhoff normal form \cite{AKN}, i.e.
there 
is an area-preserving change of coordinates which transforms $F_0$ into the resonant normal form
$N_0$ that commutes with the rotation: $N_0\circ R_{\alpha_0}=R_{\alpha_0}\circ N_0$. The change 
of coordinates and the map $N_0$ are formal series. 
The linear part of a normal form $N_0$ is $R_{\alpha_0}$. Following  the
method suggested in \cite{Tak74} by Takens (see e.g. \cite{BGSTT1993,GG2009}), we 
consider a formal series $H_0$ such that
\begin{equation}
N_0=R_{\alpha_0}\circ \Phi^1_{H_0},
\end{equation}
where $\Phi_{H_0}^t$ is a flow generated by the Hamiltonian $H_0$. The Hamiltonian 
has the Takens normal form, i.e. it is invariant
with respect to the rotation: $H_0\circ R_{\alpha_0}=H_0$.

Our goal is to transform formal series
$H_0$ to the most simple form.
We use  changes of variables
which commutate with 
the rotation $ R_{\alpha_0}$. Then in new variables
the map is still in Birkhoff normal form and
the corresponding Hamiltonian remains in 
the Takens normal form.

It is convinient to use complex variables defined by
\begin{equation}\label{Eq:z}
z=\frac{x+iy}{\sqrt2} \qquad\mbox{and}\qquad\bar z=\frac{x-i y}{\sqrt2}.
\end{equation}

A fixed point is called {\em resonant} if there exists $n\in \mathbb{N}$ such that
$\lambda_0^{n}=1$. The least positive $n$ is called the {\em order of the resonance} \cite{AKN}.
The rotation $R_{\alpha_0}$ takes the form 
$(z, \bar z) \mapsto (e^{i\alpha_0}z,e^{-i\alpha_0}
\bar z )$.
As the map $N_0$ commutes with $R_{\alpha_0}$
it contains only resonant terms:
$$
N_0(z,\bar z)=\lambda_0 z
+\sum_{\substack {k+l \ge 2 , \\k-l=1 \pmod n}} f_{kl} z^k \bar z^l .$$
 Corresponding Hamiltonian has the Takens normal form
 \cite{GG2009}:  
\begin{equation}\label{Eq:hinitial} 
H_0(z,\bar z)=\sum_{\substack {k+l \ge 3 , \\k=l \pmod n}}
h_{kl} z^k \bar z^l , \qquad h_{kl}=h_{lk}^*.\end{equation}

It was established in \cite{GG2009}, \cite{GG2014} that if
 $h_{n0} \ne 0$ 
then Hamiltonian $H_0$ can be transformed to a normal form
 
\[ \widetilde H_0(z,\bar z)=\sum_{k \ge 2} a_k z^k \bar z^k
+(z^n+\bar z^n)\sum_{k \ge 0} b_k z^k \bar z^k .\]

In the paper we consider the case of $h_{n0} = 0$ but 
$h_{22}\ne 0$ and $h_{n+1,1}^2-4h_{22}h_{2n,0} \ne 0$.

In order to investigate Hamiltonian it is convenient to use the symplectic polar coordinates
$(I,\varphi)$ given by
\begin{equation}
\label{Eq:zpolar}
\left\{
\begin{array}{rcl}
x&=&\sqrt{2I}\cos\varphi\,,\\
y&=&\sqrt{2I}\sin\varphi\,
\end{array}
\right. \quad {\rm or} \quad
\left\{
\begin{array}{rcl}
z&=&\sqrt{I}e^{i\varphi}\,,\\
\bar z&=&\sqrt{I}e^{-i\varphi}\,.
\end{array} \right.
\end{equation}
The Hamiltonian in Takens normal form (\ref{Eq:hinitial}) takes the form :
\begin{equation}\label{Eq:inter}
H_0(I,\varphi)=I^2\sum_{k\ge 0} a_kI^k+\sum_{j\ge 1,k\ge 0} a_{jk}I^{jn/2+k}\cos (j n\varphi +\beta_{jk}),
\end{equation}
where $a_k$,  $a_{jk}$  and $\beta_{jk}$ are real coefficients. 

Proposition~\ref{Pro:hn0is0} of Section~\ref{Se:hn0is0}
implies the following theorem.
\begin{theorem}\label{Te:N0}
Let $N_0$ be BNF for some APM $F_0$ with resonance of order $n$ at the origin. Let for corresponding 
Hamiltonian in TNF (\ref{Eq:inter}) $a_{10}=0$, $a_0\ne 0$ and
$a_{11}^2e^{i2\beta_{11}}-8a_0a_{20}e^{i\beta_{20}}\ne 0$.
Then there is a formal canonical change of coordinates such that
in new variables
$\widetilde N_0 =R_{\alpha_0}\circ \Phi^1_{\widetilde H_0}$ 
and
\begin{equation}\label{Eq:Hn0is0}
\widetilde H_0(I,\varphi )=I^{2}\sum_{k\ge 0} a_k {I}^k    
+I^{n/2} \sum_{k\ge 1}  b_k  \cos (n\varphi + \psi_k )  {I}^{2k} +I^{n} \cos 2n\varphi \sum_{k\ge 0}c_k I^{2k}.
\end{equation}
\end{theorem}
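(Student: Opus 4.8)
The plan is to normalize the Hamiltonian $H_0$ order by order using canonical changes of variables that commute with the rotation $R_{\alpha_0}$, exactly as in the Takens–Birkhoff scheme already invoked in the excerpt. Working in the complex coordinates $(z,\bar z)$, a canonical change commuting with $R_{\alpha_0}$ is generated by a Hamiltonian $G$ that is itself in Takens normal form (i.e. a sum of resonant monomials $g_{kl}z^k\bar z^l$ with $k\equiv l\pmod n$), and its effect on $H_0$ at leading order is the homological operation $H_0\mapsto H_0+\{G,H_0^{\mathrm{lin/leading}}\}+\dots$. Since here the quadratic-action term is absent at first order ($a_{10}=0$) but the pure twist $a_0I^2$ is nonzero, the natural "leading part" against which one solves the homological equation is $\mathcal H_2:=a_0 I^2$ together with the first genuinely resonant angle term; the operator $\{G,\cdot\}$ acting on a resonant monomial $z^{p}\bar z^{q}$ produces a monomial with a coefficient proportional to $(p-q)$ times something, so all monomials with $p\ne q$ in a suitable weighted-degree range can be killed, leaving precisely the action-only terms $a_kI^{k+2}$, one resonant harmonic $\cos(n\varphi+\psi_k)$ at each admissible action power, and the second harmonic $\cos 2n\varphi$.

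The key steps, in order, are: (i) introduce a weighted grading on monomials (assigning weight $1$ to each of $z,\bar z$, equivalently counting $I^{1/2}$-powers and harmonics) and organize $H_0$ as in (\ref{Eq:inter}); (ii) at the lowest nontrivial weight, use a generating Hamiltonian to remove everything except $a_0I^2$ — this is automatic since $a_{10}=0$; (iii) inductively, at each higher weight $m$, split the homogeneous part of $H$ into its $R_{\alpha_0}$-average (the action-only piece, untouched), the first-harmonic part $\propto\cos(n\varphi+\cdot)$, and higher-harmonic parts; solve the homological equation $\{G_m,a_0I^2\}=(\text{unwanted part})$ to eliminate all harmonics $\cos(jn\varphi+\cdot)$ with $j\ge3$, and to eliminate the first and second harmonics at all action-powers except those displayed in (\ref{Eq:Hn0is0}); (iv) check that the residual freedom in $G_m$ lets one additionally normalize the phases, and that the remaining obstruction — exactly the terms one cannot remove — is the coefficient of the unique "resonant-resonant" combination at each level, which is where the nondegeneracy condition $a_{11}^2e^{i2\beta_{11}}-8a_0a_{20}e^{i\beta_{20}}\ne0$ enters: it guarantees that the obstruction to pushing the normalization through at the order of the $\cos 2n\varphi$ term is under control, so that exactly one $c_0\neq0$ survives and the recursion continues cleanly.

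The main obstacle I expect is the bookkeeping at the crossover weight where the $I^{n}\cos 2n\varphi$ term first appears: because $h_{n0}=0$, the usual "leading resonance" monomial $z^n+\bar z^n$ is absent, so the first surviving resonant structure is quadratic in the formerly-leading terms, and the homological operator $\{\,\cdot\,,a_0I^2\}$ has a nontrivial kernel on precisely the $\cos n\varphi$ and $\cos 2n\varphi$ sectors. One must show that after using all available generators the only irreducible content left in the $\cos n\varphi$ sector is a single series $\sum b_k\cos(n\varphi+\psi_k)I^{n/2+2k}$ and in the $\cos 2n\varphi$ sector a single series $\sum c_kI^{n}\cos 2n\varphi\,I^{2k}$ — i.e. that one cannot also generate odd-in-$I$ corrections of the shape $I^{n/2+2k+1}\cos n\varphi$ etc.; this parity/weight-matching is dictated by the requirement $k\equiv l\pmod n$ from (\ref{Eq:hinitial}) combined with the $I^2$-twist, and verifying it carefully is the crux. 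Everything else — convergence is not an issue since the statement is about formal series, and the change of coordinates is assembled as a formal composition of the time-one maps $\Phi^1_{G_m}$ — follows routinely, and the cited Proposition~\ref{Pro:hn0is0} supplies the algebraic identity that makes the induction step go through.
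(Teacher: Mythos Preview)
Your outline has a genuine gap: the homological equation you propose to solve, $\{G_m, a_0 I^2\}=(\text{unwanted part})$, is the wrong one for this normal form. The operator $\{\,\cdot\,,a_0 I^2\}$ acts on a resonant monomial $z^k\bar z^l$ by $z^k\bar z^l\mapsto (k-l)\,z^{k+1}\bar z^{l+1}$ (up to constants), so it preserves the harmonic index $j=(k-l)/n$ and only shifts the action power. Its image therefore misses \emph{every} lowest monomial $z^{jn},\bar z^{jn}$ for $j\ge 1$, not just $j=1,2$; with this operator you cannot eliminate harmonics $j\ge 3$, and what you would obtain is precisely the alternative form of Theorem~\ref{Te:N01} (Proposition~\ref{Pro:hn0is0case1}), not (\ref{Eq:Hn0is0}). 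Likewise, nothing in this operator distinguishes even from odd extra powers of $I$ in the $\cos n\varphi$ and $\cos 2n\varphi$ sectors, so the restriction to $I^{n/2+2k}$ and $I^{n+2k}$ cannot emerge from your scheme.

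The paper's mechanism is different and is where the nondegeneracy hypothesis actually enters. One introduces the $\delta$-degree $\delta(z^k\bar z^l)=\tfrac12(k+l)-\tfrac{n-2}{2n}|k-l|$, under which $z^2\bar z^2$, $z^{n+1}\bar z$, $z\bar z^{n+1}$, $z^{2n}$, $\bar z^{2n}$ all sit at level $2$. A preliminary substitution with $\chi=\alpha z^n+\alpha^*\bar z^n$ and a rotation put the level-$2$ part into the form $h_2=a_0 z^2\bar z^2+c_0(z^{2n}+\bar z^{2n})$ with $c_0=\bigl|h_{2n,0}-h_{n+1,1}^2/(4h_{22})\bigr|$; the hypothesis $a_{11}^2e^{2i\beta_{11}}-8a_0 a_{20}e^{i\beta_{20}}\ne 0$ is exactly $c_0\ne 0$. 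The homological operator at $\delta$-level $p$ is then $L(\chi)=[L_\chi h_2]_p$, which on a basis monomial $Q_{p-1,j}$ produces a combination of $Q_{p,j}$ (via $a_0$) and $Q_{p,j+2}$ (via $c_0$). Because $c_0\ne 0$, this two-term recursion lets one sweep both upward and downward in $j$, killing all harmonics $j\ge 3$ and, within $j=1,2$, all but the parity class displayed in (\ref{Eq:Hn0is0}). Without the $c_0$ term in the homological operator your induction cannot reach the stated normal form.
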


The coefficients in the form (\ref{Eq:Hn0is0})
are not unique: the coefficients $b_k$ are replaced  by
$-b_k$ after rotation by $\pi /n$.

There is also an alternative normal form 
(only $a_0\ne 0$ is required),
which contains fewer terms of low orders:
\begin{theorem}\label{Te:N01}
Let $N_0$ be BNF for some APM $F_0$ with resonance of order $n$ at the origin. Let for corresponding 
Hamiltonian in TNF (\ref{Eq:inter})  $a_0\ne 0$.
Then there is a formal canonical change of coordinates such that
in new variables
$\widetilde N_0 =R_{\alpha_0}\circ \Phi^1_{\widetilde H_0}$ 
and
\[
 {\widetilde H_0}(I,\varphi )=I^{2}\sum_{k\ge 0} a_k {I}^k    
+\sum_{k\ge 1} c_k I^{nk/2} \cos (kn\varphi +\psi_k). \]
\end{theorem}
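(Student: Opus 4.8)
The plan is to normalize the Takens Hamiltonian $H_0$ in the complex variables $(z,\bar z)$, removing one unwanted monomial at a time by Lie transforms whose generating Hamiltonians are themselves in Takens normal form, using the twist term $a_0(z\bar z)^2$ (where $a_0$ is the coefficient of $I^2=z^2\bar z^2$) as the invertible leading part of the homological operator. The first observation is that such transformations are legitimate at the level of the map: if $S\circ R_{\alpha_0}=S$ then $\Phi^1_S$ commutes with $R_{\alpha_0}$, so conjugating $N_0=R_{\alpha_0}\circ\Phi^1_{H_0}$ by $\Phi^1_S$ again produces a map of the shape $R_{\alpha_0}\circ\Phi^1_{\widehat H_0}$ with $\widehat H_0=H_0\circ\Phi^1_S=H_0+\{H_0,S\}+\tfrac12\{\{H_0,S\},S\}+\cdots$ again a Takens Hamiltonian. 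Rewriting the target form of the theorem via $I^m=(z\bar z)^m$ and $I^{jn/2}e^{\pm ijn\varphi}=z^{jn},\bar z^{jn}$, it reads $\widetilde H_0=\sum_{m\ge2}a_m(z\bar z)^m+\sum_{j\ge1}\bigl(d_jz^{jn}+\bar d_j\bar z^{jn}\bigr)$; so the task reduces to eliminating every \emph{mixed} resonant monomial $z^k\bar z^l$ with $k\equiv l\pmod n$, $k\ne l$ and $k,l\ge1$, leaving only the twist monomials $(z\bar z)^m$ and the pure powers $z^{jn},\bar z^{jn}$.

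I would carry out this elimination by induction on the total degree $d=k+l$, assuming $H_0$ already reduced through degree $d-1$ and removing the mixed monomials of degree $d$. A direct computation shows that $\{a_0(z\bar z)^2,\,z^{k-1}\bar z^{l-1}\}$ is a non-zero multiple of $a_0(l-k)\,z^k\bar z^l$; hence a mixed monomial $\mu_{kl}z^k\bar z^l$ (with $k-l=jn\ne0$, $k,l\ge1$) is cancelled by adding a suitable, uniquely determined multiple of $z^{k-1}\bar z^{l-1}$ to the generator. The hypothesis $a_0\ne0$ is exactly the solvability condition of this homological equation, and it is the only one needed, since the other factor $l-k=-jn$ never vanishes for a mixed monomial. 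Summing these terms over all mixed monomials of degree $d$ gives a generator $S_d$ that is a resonant series homogeneous of degree $d-2\ge3$ (so $\Phi^1_{S_d}$ is a well-defined formal symplectic map) and that is real because $\mu_{lk}=\mu_{kl}^{*}$ by the reality of $H_0$; replacing $H_0$ by $H_0\circ\Phi^1_{S_d}$ deletes the mixed monomials of degree $d$. The formal composition of the $\Phi^1_{S_d}$, $d=5,6,7,\dots$, is then the desired canonical change of variables commuting with $R_{\alpha_0}$, and returning to $(I,\varphi)$ converts the surviving pure powers into $\sum_{j\ge1}c_jI^{jn/2}\cos(jn\varphi+\psi_j)$; the sign ambiguity of the $c_j$ is the symmetry $\varphi\mapsto\varphi+\pi/n$ mentioned in the paper.

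The hard part is checking that this scheme is \emph{triangular} with respect to the degree filtration — that $\Phi^1_{S_d}$ touches nothing of degree $<d$ and re-creates none of the monomials already removed — and this is the only point where the degeneracy of the fixed point is used. In the regime studied in the paper, $h_{n0}=0$ (equivalently $a_{10}=0$), the Hamiltonian $H_0$ consists of the single monomial $a_0(z\bar z)^2$ at degree $4$ together with terms of degree $\ge5$, and in particular contains no pure power $z^n$ or $\bar z^n$; one checks, by the same vanishing arguments that make the homological equation solvable, that this structure is preserved by every $\Phi^1_{S_d}$. Consequently the Poisson bracket of $S_d$, of degree $d-2$, with any monomial of $H_0$ other than $a_0(z\bar z)^2$ has degree at least $(d-2)+5-2=d+1$, and every higher bracket in the Lie series $H_0\circ\Phi^1_{S_d}$ has degree $>d$ as well, so the degree-$d$ part of $H_0$ is altered only by $\{a_0(z\bar z)^2,S_d\}$ and the lower degrees not at all; the induction then closes. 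For $n\ge5$ no resonant monomial other than the twist has degree below $5$ in any case, so the triangularity needs no hypothesis; the subtlety is confined to $n=3,4$, where the only resonant monomials of degree $<5$ are the pure powers $z^n,\bar z^n$, removed by the standing assumption $h_{n0}=0$. The whole argument can equally be run directly in the polar form (\ref{Eq:inter}), with generators $S_d=\sum s\,I^{jn/2+m-1}\sin(jn\varphi+\beta)$, using that $\{a_0I^2,\,I^{jn/2+m-1}\sin(jn\varphi+\beta)\}$ is a non-zero multiple of $I^{jn/2+m}\cos(jn\varphi+\beta)$.
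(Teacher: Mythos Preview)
Your proof is correct and follows essentially the same route as the paper's proof of Proposition~\ref{Pro:hn0is0case1}: Lie transforms with resonant generators, induction on the total degree $k+l$, and the homological operator built from the twist term $a_0(z\bar z)^2$, solved by $\alpha=-h_{k+1,l+1}/(2ia_0(k-l))$. Your discussion of triangularity and the role of $h_{n0}=0$ for $n=3,4$ is more explicit than the paper's, which simply absorbs these issues into the hypothesis $H=a_0z^2\bar z^2+\sum_{k+l\ge5}h_{kl}z^k\bar z^l$ and the remainder estimate $O_{k+l+3}$.
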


Corresponding Proposition~\ref{Pro:hn0is0case1} 
is proved in Section~\ref{Se:hn0is0}.


\subsection{Families}

Let us consider a three-parametric family $F_\mu$ of APM with fixed point
of an elliptic type at the origin with 
$\lambda_\mu =e^{i\alpha_\mu}$, 
$ \mu =(\mu_1,\mu_2,\mu_3) \in \mathbb{R}^3$.
We assume that for $\mu = (0,0,0)$ 
the function $F_0$ has resonance of order $n$ (i.e. 
$\lambda_0=e^{i\alpha_0}$, $\lambda_0^n=1 $).

After a linear change of coordinates the map $F_\mu$
takes the form
$ F_\mu= R_{\alpha_\mu} \circ \Phi $,
where $\Phi$ is a tangent-to-identity APM.
In complex variables (\ref{Eq:z}) $F_\mu =(f_\mu ,\bar f_\mu)$,
\[ f_\mu (z,\bar z)=\lambda_\mu z +\sum_{k+l\ge 2}
f_{kl}(\mu )z^k\bar z^l .\]
It is natural to use the value
\begin{equation}\label{Eq:epsilon}
\varepsilon = \alpha_0 -\alpha_\mu
\end{equation}  
as one of parameters.
Let $\mu_3$  be expressed in terms of 
$(\varepsilon , \mu_1, \mu_2 )$. Then
$f_\mu(z,\bar z)$ can be presented as a series in
three variables $(z,\bar z, \varepsilon )$
with coefficients depending on $(\mu_1 ,\mu_2 )$:
\[ f_\mu (z,\bar z)=\lambda_0 z+\sum_{k+l+m \ge 2}
f_{klm}(\mu_1,\mu_2)z^k\bar z^l \varepsilon^m. \]
After an appropriate change of coordinates the map
$f_\mu$ can be written in
Birkhoff  normal form which contains only resonant terms:
\begin{equation}\label{Eq:BNFfam}
f_\mu (z,\bar z)=\lambda_0 z+\sum
_{\substack{ k+l+m \ge 2\cr k-l=1\pmod n}}
f_{klm}(\mu_1,\mu_2)z^k\bar z^l \varepsilon^m.
\end{equation}

Interpolation theorem for families (see for example \cite{GG2009})
gives $F_\mu=R_{\alpha_{0}} \circ \Phi^1_{H_\mu} $, 
where  Hamiltonian $H_\mu$ in complex variables 
has the form
\begin{equation}\label{Eq:hpinitial} 
H_\mu (z, \bar z)=\sum_{\substack{ k+l+m \ge 3\cr k=l\pmod n}}h_{klm}(\mu_1, \mu_2)z^k\bar z^l\varepsilon^m ,
\end{equation}
where $h_{111}(\mu_1, \mu_2) =1$
as $\varepsilon $ is determined by (\ref{Eq:epsilon}).
Or, in the symplectic coordinates (\ref{Eq:zpolar})

\begin{equation}\label{Eq:hpinitialIPhi} 
\begin{array}{rcl}
H_\mu (I,\varphi )&=&\varepsilon I +
I^2\sum_{\substack{ k \ge 0\cr m \ge 0}}a_{km}(\mu_1, \mu_2)I^k\varepsilon^m  \\
&+&\sum_{\substack{ j \ge 1\cr k,m \ge 0}}
a_{jkm}(\mu_1, \mu_2)I^{k+j\frac{n}{2}}\varepsilon^m
\cos (jn\varphi +\psi_{jkm}(\mu_1, \mu_2)).
\end{array}
\end{equation}

The form 
(\ref{Eq:hpinitialIPhi})
is not unique. Rotations and time-one shifts 
$\Phi^1_\chi$ with resonant Hamiltonian $\chi $
preserve its structure.
Our goal is to derive the most simple form of $H_{\mu}$,
{\it i.e.} to eliminate as many terms of $H_{\mu}$
as possible. If $a_{100}(\mu_1 ,\mu_2 )=
2|h_{n00}(\mu_1 ,\mu_2 )|\ne 0$ for all values of
$(\mu_1 ,\mu_2 )$ then
 Hamiltonian can be transformed 
to the form \cite{Gelfarxiv}:
\[\widetilde H_\mu(z, \bar z)=\varepsilon z \bar z +
\sum_{ k \ge 2, m \ge 0}a_{km}(\mu_1,\mu_2)z^k\bar z^k\varepsilon^m 
+ (z^n+\bar z^n)\sum_{k,m \ge 0}b_{km}(\mu_1,\mu_2)
z^k\bar z^k\varepsilon^m\]
or, in coordinates $(I,\varphi )$:
\begin{equation}\label{Eq:Hinpolar}
\widetilde H_\mu (I,\varphi )
=\varepsilon I +
\sum_{k \ge 2, m\ge 0}a_{km}(\mu_1, \mu_2)I^k\varepsilon^m 
+I^{n/2}\cos n\varphi \sum_{k,m \ge 0}b_{km}(\mu_1, \mu_2)I^k
\varepsilon^m.
\end{equation}

In this paper the main attention is paid to the case
of Hamiltonian (\ref{Eq:hpinitial}) 
when the following conditions are valid:
\begin{equation}\label{Eq:condit}
h_{n00}(0,0)=0, \quad
h_{220}(0,0)\ne 0,\quad
h_{n+1,1,0}^2(0,0)-4h_{220}(0,0)h_{2n,0,0}(0,0)\ne 0.
\end{equation}
Note that these conditions correspond to a family which is a
three-parametric unfolding of $F_0$ from Theorem~\ref{Te:N0}. We show that there is such canonical change of
variables that Hamiltonian has the form (\ref{Eq:hpinitialIPhi}) with
 the last sum 
containing only two harmonics, {\it i.e.} 
terms with $j=1$ and $j=2$.
Moreover $a_{1,2l+1,m}=0$, $a_{2,2l+1,m}=0$
and $\psi_{2,2l,m}=0$ for all $l$ and $m$.

For the sake of receiving 
$\psi_{200}(\mu_1,\mu_2)=0$, 
as it will be shown
in Section~\ref{Se:Familieshn0}, it is necessary 
to make a rotation by an angle $\varphi =-\frac{1}{2n}
\arg \left( h_{2n,0,0}-\frac{h_{n+1,1,0}^2}{4h_{220}}   \right)  $.

After the rotation the small coefficient of $z^n$
in (\ref{Eq:hpinitial}) is
\begin{equation}\label{Eq:nu}
h_{n00} (\mu_1, \mu_2)\exp \left( -\frac{i}{2}
\arg \left( h_{2n,0,0}(\mu_1, \mu_2)-\frac{h_{n+1,1,0}^2(\mu_1, \mu_2)}{4h_{220}(\mu_1, \mu_2)}   \right) \right)=\nu(\mu_1, \mu_2) ,
\end{equation}
\[ \nu (\mu_1, \mu_2)= \nu_1(\mu_1, \mu_2)+i\nu_2(\mu_1, \mu_2)
=\gamma (\mu_1, \mu_2)e^{i\beta(\mu_1, \mu_2)}.\]
Let $\mu_1$
and $\mu_2$ be expressed by $\nu_1$ and $\nu_2$:
$\mu_1=\mu_1 (\nu_1,\nu_2)$, $\mu_2=\mu_2 (\nu_1,\nu_2)$.
Then we can consider $(\nu_1,\nu_2)$ as new parameters
instead of $(\mu_1,\mu_2 )$.

Below we use the following notations: 
$\mu=(\mu_1, \mu_2, \mu_3)$,  $\Upsilon =(\varepsilon, \nu_1,\nu_2   ) $, $\mathbf{m}=(m_1,m_2,m_3)$, $|\mathbf{m}|=m_1+m_2+m_3$, $\Upsilon^{\mathbf{m}}= \varepsilon^{m_1} \nu_1^{m_2}\nu_2^{m_3}$.

The following theorem gives a simplification of the Hamiltonian to a normal form.
\begin{theorem}\label{Thr:Famhn0is0}
Let $F_{\mu}$ be a smooth ($C^\infty$ or analytic) family of area preserving maps with fixed point
of elliptic type at the origin such that

(1) $F_{\mathbf{0}}$ has a resonant of order $n$
at the origin: $\lambda_0=e^{i\alpha_0}$, 
$\lambda^n=1$;

(2) coefficients of Hamiltonian $H_\mu$
in the TNF (\ref{Eq:hpinitial}) satisfy conditions 
(\ref{Eq:condit});

(3) parameters $\mu =(\mu_1,\mu_2,\mu_3)$
can be expressed by $(\varepsilon , \nu_1, \nu_2)$
defining by
(\ref{Eq:epsilon}) and (\ref{Eq:nu}).

Then there is a formal Hamiltonian 
$\widetilde H_{\Upsilon}$ and
formal canonical change of variables which conjugates $F_{\mu}$ with
$R_{\alpha_0}\circ\Phi^1_{\widetilde H_{\Upsilon}}$.
Moreover, $\widetilde H_{\Upsilon}$
in coordinates $(I,\varphi )$ has the 
following form:
\begin{eqnarray*}
\widetilde H_{\Upsilon}(I, \varphi  )=\varepsilon I+\gamma I^{n/2} \cos (n\varphi +\beta )
+I^{2}\sum_{k,|\mathbf{m}|\ge0} a_{k\mathbf{m}} I^k \Upsilon^{\mathbf{m}}    
\\ +I^{n/2}\sum_{\substack {k,|\mathbf{m}|\ge0, \\k+m_1\ge 1}} b_{k\mathbf{m}}{I}^{2k}  \Upsilon^{\mathbf{m}}  \cos (n\varphi +\psi_{k\mathbf{m}} )   
\\ + I^n \sum_{k,|\mathbf{m}|\ge0} c_{k\mathbf{m}}I^{2k}\Upsilon^{\mathbf{m}}\cos (2n\varphi)   .
\end{eqnarray*}
\end{theorem}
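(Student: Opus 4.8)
The plan is to carry out the normalization by Lie series, working order by order in the combined grading where $z,\bar z$ have weight $1$ and the parameters $\varepsilon,\nu_1,\nu_2$ also have weight $1$ (so that $\varepsilon I$, $\gamma I^{n/2}\cos(n\varphi+\beta)$ and the twist term $I^2$ all sit at the same level that controls the dynamics). First I would invoke the interpolation theorem for families to reduce $F_\mu$ to $R_{\alpha_0}\circ\Phi^1_{H_\mu}$ with $H_\mu$ in the Takens normal form (\ref{Eq:hpinitial}), then perform the preliminary rotation by $-\frac{1}{2n}\arg\bigl(h_{2n,0,0}-h_{n+1,1,0}^2/(4h_{220})\bigr)$ described before (\ref{Eq:nu}) and the reparametrization $(\mu_1,\mu_2)\mapsto(\nu_1,\nu_2)$, so that the coefficient of $z^n$ becomes exactly $\nu=\gamma e^{i\beta}$. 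This puts us in the situation of Theorem~\ref{Te:N0} perturbed by the parameters: at $\Upsilon=0$ we recover $\widetilde H_0$ of (\ref{Eq:Hn0is0}), and the extra terms we must kill are those carrying positive powers of $\varepsilon,\nu_1,\nu_2$.

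The heart of the argument is a cohomological-equation computation. Changes of variables that preserve the Birkhoff/Takens structure are time-one maps $\Phi^1_\chi$ with $\chi$ a resonant Hamiltonian (depending on the parameters); conjugating $R_{\alpha_0}\circ\Phi^1_{H}$ by $\Phi^1_\chi$ replaces $H$ by $H+\{\chi,\cdot\}$-type corrections, and at each order the obstruction is the image of the operator $\chi\mapsto \chi\circ R_{\alpha_0}^{-1}-\chi$, which in the $(I,\varphi)$ coordinates acts diagonally on Fourier modes $e^{ijn\varphi}$. The key point, exactly as in the proof of Proposition~\ref{Pro:hn0is0}/Theorem~\ref{Te:N0}, is that with $\gamma\ne 0$ (hence $h_{n00}$ acting as the leading resonant coefficient after reparametrization) one can use the $j=1$ harmonic to absorb, order by order, all higher harmonics $j\ge 3$ and to normalize the $j=2$ harmonic to the cosine form $\cos 2n\varphi$ with no phase; simultaneously the freedom in $\chi$ removes the odd-in-$I$ coefficients in the $j=1$ and $j=2$ sums (this is the source of $a_{1,2l+1,m}=0$, $a_{2,2l+1,m}=0$, $\psi_{2,2l,m}=0$ in the discussion preceding the theorem). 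I would organize this as an induction on the total degree in $(I^{1/2},\varepsilon,\nu_1,\nu_2)$: assume $\widetilde H_\Upsilon$ has the claimed shape up to degree $N$, write the degree-$(N{+}1)$ part of the current Hamiltonian in its Fourier expansion, solve the cohomological equation for the degree-$(N{+}1)$ part of $\chi$ to eliminate the unwanted modes, and check that the transformation introduces nothing of lower degree.

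The main obstacle — and the place the nondegeneracy hypotheses (\ref{Eq:condit}) enter decisively — is invertibility of the relevant homological operator on the $j=2$ sector when we try to simultaneously straighten the second harmonic and eliminate odd powers there. At $\Upsilon=0$ this is precisely the role of the discriminant condition $h_{n+1,1,0}^2-4h_{220}h_{2n,0,0}\ne 0$: it guarantees that, after using the $h_{n00}$-term to generate a $j=2$ contribution via the Poisson bracket $\{z^n,\cdot\}$, the combined coefficient one can reach in the second harmonic is nonzero, so the normalization of $\cos 2n\varphi$ (and the killing of its phase) goes through. For $\Upsilon\ne 0$ one needs this to persist, which follows because the discriminant is an open condition and the construction is done perturbatively off $\Upsilon=0$; care is required to track that the reparametrization by $\nu$ and the $\varepsilon$-dependence do not spoil the leading-coefficient bookkeeping, but no new idea beyond Theorem~\ref{Te:N0} is needed. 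Finally, since every step is a formal canonical change respecting the rotation symmetry, the resulting $\widetilde H_\Upsilon$ is automatically in Takens normal form, and collecting the surviving terms gives exactly the displayed expression; smoothness ($C^\infty$ or analytic) of the dependence on $\Upsilon$ is inherited from the smoothness of the family and of the finitely many reparametrizations, by the usual Borel/Whitney extension argument in the $C^\infty$ case and convergence-free formal manipulation in the analytic case.
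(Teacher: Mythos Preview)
Your proposal contains a genuine gap in the identification of the homological operator. You claim that ``with $\gamma\ne 0$ (hence $h_{n00}$ acting as the leading resonant coefficient after reparametrization) one can use the $j=1$ harmonic to absorb, order by order, all higher harmonics $j\ge 3$,'' and you invoke $\{z^n,\cdot\}$ as the mechanism. But this is precisely what is \emph{not} available here: the hypothesis (\ref{Eq:condit}) says $h_{n00}(0,0)=0$, and after the reparametrization $\nu=\gamma e^{i\beta}$ is a \emph{parameter} that vanishes at the centre of the family. If you use $\gamma$ as the leading coefficient in the homological operator, the inverse produces factors of $1/\gamma$, and the normalization is not uniform in $\Upsilon$; at $\Upsilon=0$ it simply breaks down. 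This is also a misreading of Proposition~\ref{Pro:hn0is0}: there $h_{n0}=0$, and the homological operator is \emph{not} built from the first harmonic.

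What the paper actually does is to base the homological operator on the $\delta$-degree-two block
\[
h_2(z,\bar z)=a_0 z^2\bar z^2 + c_0(z^{2n}+\bar z^{2n}),
\]
where $a_0=h_{220}(0,0)\ne 0$ and $c_0=\bigl|h_{2n,0,0}-h_{n+1,1,0}^2/(4h_{220})\bigr|\ne 0$. The discriminant condition in (\ref{Eq:condit}) is used once, in a preliminary step, to eliminate $h_{n+1,1,0}$ via a generating function $\chi=\beta z^n+\beta^*\bar z^n$ and to ensure $c_0\ne 0$; after that it plays no further role. The operator $L(\chi)=[L_\chi h_2]_p$ then acts, at each $\delta$-degree, by a pair of shifts: the twist term $a_0$ keeps the harmonic index $j$ fixed, and the $c_0$-term moves $j\mapsto j+2$. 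It is this two-generator structure (not the first harmonic) that allows one to kill everything except the $j=0,1,2$ contributions with the stated parities in $I$. For the family, the paper grades by $\delta_\Upsilon(z^k\bar z^l\Upsilon^{\mathbf m})=\delta(z^k\bar z^l)+2|\mathbf m|$ (parameters carry weight $2$, not $1$ as you propose), so that at each $\delta_\Upsilon$-level the same parameter-free operator $L$ appears, applied separately to each monomial $\Upsilon^{\mathbf M}$. Your weight-$1$ grading would mix $\varepsilon z\bar z$ and $\nu z^n$ into the leading block and force you to invert something depending on the parameters, which is exactly the difficulty you have not resolved.
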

The theorem follows from Proposition~\ref{Pro:Famhn0is0} of Section~\ref{Se:Familieshn0}.

From Proposition~\ref{Pro:Famhnois0} follows an alternative variant, useful for study of bifurcations:

\begin{theorem}\label{Thr:Famhnois0}
If $F_{\mu}$ is a smooth ($C^\infty$ or analytic) family of area preserving maps
such that $F_{\mathbf{0}}$ has a  resonant
 elliptic fixed point at the origin and in
  (\ref{Eq:hpinitialIPhi}) $a_{00}(0,0)\ne 0$,
then there is a formal Hamiltonian 
$\widetilde H_{\Upsilon}$ and
formal canonical change of variables which conjugates $F_\mu$ with
$R_{\alpha_0}\circ\Phi^1_{\widetilde H_{\Upsilon}}$.
Moreover, $\widetilde H_{\Upsilon}$ 
has the following form:
\begin{equation}\label{Eq:Hfamalter}
\begin{array}{r}
\widetilde H_{\Upsilon}(I, \varphi  )=\varepsilon I+\gamma I^{n/2} \cos (n\varphi +\beta )
+I^{2}\sum_{k,|\mathbf{m}|\ge0} a_{k\mathbf{m}} I^k \Upsilon^{\mathbf{m}} \\
\qquad \qquad + I^{n/2} 
\sum_{\substack {k+m_1\ge 1, \\m_2,m_3\ge 0}}
c_{k\mathbf{m}}I^{kn/2}\Upsilon^{\mathbf{m}}\cos ((k+2)n\varphi+\psi_{k\mathbf{m}})  .
\end{array}
 \end{equation}

\end{theorem}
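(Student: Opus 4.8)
The plan is to reduce the statement to a formal normal‑form lemma for the interpolating Hamiltonian and to prove that lemma by an order‑by‑order homological scheme, exactly as for the individual map in Theorem~\ref{Te:N01}, but now with the parameters $\Upsilon=(\varepsilon,\nu_1,\nu_2)$ adjoined to the phase variables. First I would apply the interpolation theorem for families to write $F_\mu=R_{\alpha_0}\circ\Phi^1_{H_\mu}$ with $H_\mu$ in the Takens normal form~(\ref{Eq:hpinitialIPhi}); the hypothesis $a_{00}(0,0)\ne0$ makes the twist coefficient $a_{00}(\mu_1,\mu_2)$ an invertible formal series. The only coordinate changes I would use are the time‑one maps $\Phi^1_\chi$ of resonant (Takens normal form) Hamiltonians $\chi$, together with one fixed rotation; conjugating $R_{\alpha_0}\circ\Phi^1_H$ by such a $\Phi^1_\chi$ gives $R_{\alpha_0}\circ\Phi^1_{H\circ\Phi^1_\chi}$, and since $\chi$ commutes with $R_{\alpha_0}$ the transformed Hamiltonian $H\circ\Phi^1_\chi=H+\{H,\chi\}+\tfrac12\{\{H,\chi\},\chi\}+\cdots$ is again resonant. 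So everything reduces to simplifying $H_\mu$ by iterating $H\mapsto H+\{H,\chi\}+\cdots$, and the conjugating change of variables is the (formal) composition of all the $\Phi^1_\chi$ used.

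To organize the iteration I would grade monomials by assigning weight $1$ to $z,\bar z$ (equivalently to $\sqrt I$), weight $2$ to $\varepsilon$ and weight $n$ to $\nu_1,\nu_2$, and take as seed the low‑weight part $H^{\le}=\varepsilon z\bar z+a_{00}(z\bar z)^2+\tfrac12(\nu z^n+\bar\nu\bar z^n)$, i.e.\ $\varepsilon I+a_{00}I^2+\gamma I^{n/2}\cos(n\varphi+\beta)$. Writing $\operatorname{ad}_{H^{\le}}=L'+L''$ with $L'=\operatorname{ad}_{\varepsilon I+a_{00}I^2}$ and $L''=\operatorname{ad}_{\frac12(\nu z^n+\bar\nu\bar z^n)}$, one checks on a resonant monomial $z^{l+jn}\bar z^l\,\Upsilon^{\mathbf m}$ that $\operatorname{ad}_{\varepsilon I}$ raises $m_1$ by one and $\operatorname{ad}_{a_{00}I^2}$ raises $l$ by one, each with a non‑zero factor proportional to $j$, while $L''$ shifts the harmonic index $j$ by $\pm1$; every summand of $\operatorname{ad}_{H^{\le}}$ strictly increases the weight. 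Normalizing weight by weight, the $L'$‑homological equation has no obstruction outside the twist sector $j=0$ (the factor $\propto j$ is invertible and the $l$‑ and $m_1$‑shifts are strictly weight‑increasing, hence nilpotent); solving it removes, in every harmonic $j\ge1$, all monomials with $l\ge1$ or $m_1\ge1$, so the harmonic part collapses onto the minimal powers $z^{jn}$ times series in $\nu_1,\nu_2$, while the kernel of $L'$ at $j=0$ survives as the twist series $I^2\sum a_{k\mathbf m}I^k\Upsilon^{\mathbf m}$. Then I would use $L''$ to eliminate the part of harmonic $1$ of order higher than $\gamma I^{n/2}\cos(n\varphi+\beta)$; a further formal re‑parametrization of $\nu$, possible because the harmonic‑$1$ coefficient of $z^n$ has invertible linear part $\tfrac12\nu$ by the definition~(\ref{Eq:nu}), fixes the parameter so that this coefficient equals exactly $\tfrac12\nu$, and one last rotation normalizes the admissible phases ($\beta$ and the $\psi_{k\mathbf m}$). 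Collecting the surviving monomials in the coordinates $(I,\varphi)$ yields~(\ref{Eq:Hfamalter}), which is the content of Proposition~\ref{Pro:Famhnois0}, from which the theorem follows.

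The main obstacle, and where the genuine work lies, is the interaction of the harmonic‑preserving part $L'$ with the coupling part $L''$ in the lowest harmonics, together with the bookkeeping that identifies exactly which low‑order resonance terms survive and in what order the elementary steps must be performed so that they do not interfere (in particular, $\operatorname{ad}_{\frac12\nu z^n}$ applied to twist generators feeds back into the harmonic‑$1$ coefficient of $z^n$). Because $L''$ is proportional to $\nu$ it degenerates at $\mu=\mathbf0$, yet to reach the stated form it must be kept in the seed; one must therefore show that at each weight the finite‑dimensional linear system $(L'+L'')\chi=(\text{obstruction})$ is solvable into the prescribed target. I would do this by refining the ordering — first by harmonic index $j$, then by the power of $I$, then by multidegree in $\Upsilon$ — so that $L'+L''$ becomes block‑triangular with diagonal blocks the invertible operators $\operatorname{ad}_{\varepsilon I+a_{00}I^2}$ on the non‑twist harmonics; no division by the vanishing ``$j=0$'' eigenvalue ever occurs, since the twist sector is deliberately excluded from the target of elimination. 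Separately one must verify that the target form~(\ref{Eq:Hfamalter}) is closed under the quadratic and higher tails $\tfrac12\{\{H,\chi\},\chi\}+\cdots$: each tail has strictly larger weight than the term it corrects, so it only feeds into equations solved at later stages, and the Poisson‑bracket computations above show it cannot create a monomial outside the admissible list — in particular it cannot regenerate a higher‑order harmonic‑$1$ term. Since the statement is purely formal there is no convergence issue; the conjugating transformation is the infinite composition of the weight‑by‑weight steps, each altering only finitely many Taylor coefficients, and the argument is identical in the $C^\infty$ and analytic categories.
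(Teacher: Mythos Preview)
Your reduction to Proposition~\ref{Pro:Famhnois0} and the overall plan of normalising weight by weight via resonant generators are correct, but the central homological step contains a genuine error. You assert that $L'=\operatorname{ad}_{\varepsilon I+a_{00}I^{2}}$ has ``no obstruction outside the twist sector'' and that solving the $L'$-equation collapses each harmonic $j\ge1$ to $z^{jn}$ times a series in $\nu_1,\nu_2$ alone. On harmonic $j$, however, $L'$ acts on the coefficient $f(I,\varepsilon,\nu_1,\nu_2)$ as multiplication by a nonzero constant times $(\varepsilon+2a_{00}I)$; the image is the principal ideal $(\varepsilon+2a_{00}I)$, and any complement in $\mathbb{C}[[I,\varepsilon,\nu_1,\nu_2]]$ is a three-variable power-series ring such as $\mathbb{C}[[\varepsilon,\nu_1,\nu_2]]$, never $\mathbb{C}[[\nu_1,\nu_2]]$. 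Concretely, at fixed $j,m_2,m_3$ and fixed weight the monomials with $l+m_1=K$ span $K{+}1$ dimensions while the available generators span only $K$, so a one-dimensional obstruction survives for every $K\ge1$; you cannot eliminate all monomials with $l\ge1$ \emph{and} all with $m_1\ge1$. You have also misread the target: in~(\ref{Eq:hfamalt}) the harmonic coefficients $c_{k\mathbf m}$ carry the full $\Upsilon^{\mathbf m}=\varepsilon^{m_1}\nu_1^{m_2}\nu_2^{m_3}$, not just $\nu$-powers. The follow-up with $L''$ and a reparametrisation of $\nu$ does not help, since $L''$ shifts the harmonic index and raises weight by $2n-2$, so it never touches the cokernel of $L'$ at a given harmonic and weight.

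The paper's route is simpler and avoids this. It assigns the \emph{same} weight $2$ to all three parameters, $\delta_\Upsilon(z^{k}\bar z^{l}\Upsilon^{\mathbf m})=k+l+2|\mathbf m|$, and at each step uses a $\Upsilon$-independent generator $\chi(z,\bar z)$ via the substitution $\Phi_\chi^{\Upsilon^{\mathbf M}}$, processing the multi-indices $\mathbf M$ in a triangular order so that the contribution of $\varepsilon z\bar z$ (which lands at a larger $|\mathbf M|$) is absorbed later. The homological operator on each slot is then just $\chi\mapsto L_\chi\bigl(a(z\bar z)^{2}\bigr)$, exactly as in Proposition~\ref{Pro:hn0is0case1}: on harmonic $j\ne0$ it is multiplication by a nonzero multiple of $z\bar z$, with complement $z^{jn}\cdot\mathbb{C}[[\varepsilon,\nu_1,\nu_2]]$, which is the form~(\ref{Eq:hfamalt}). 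No separate $L''$-step and no reparametrisation beyond the initial definition of~$\nu$ are needed.
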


Then we use the normal forms (\ref{Eq:Hinpolar}) and 
(\ref{Eq:Hfamalter}) to study biffurcations.
We keep lower order terms 
and skip those, which do not change the picture qualitatively.
The detailed discussion of typical level sets and their bifurcations for the case of degeneracy in the main
resonant term are presented in Section~\ref{Se:biflead}.

In Section~\ref{Se:Familiesh33} there is a brief 
description of some possible bifurcations for
three-parametric families
when in (\ref{Eq:Hinpolar}) $a_{20}(0,0)=a_{30}(0,0)=0$, 
 $b_{00}(\mu_1 ,\mu_2 )\ne 0$.
This case is different from two-parametric families considered in \cite{GG2014} only in tiny domain in the space of parameters. 

\section{Simplification of a formal Hamiltonian with $h_{n,0}=0$ for individual map\label{Se:hn0is0}} 

In this section we construct two degenerate resonant normal
forms (\ref{Eq:NFhn0is0}) and  (\ref{Eq:hnois0}).
The first one provides
particularly simple form of Hamiltonian in the symplectic polar coordinates: it contains only two lowest harmonics of
the angle variable, namely $n\varphi$ and $2n\varphi$.
An alternative normal form (\ref{Eq:hnois0}) 
contains fewer terms of low orders.

\begin{proposition}\label{Pro:hn0is0}
If  \begin{equation}\label{Eq:hindiv}
H(z,\bar z)=\sum_{\substack {k+l \ge 4 ,\ k,l\ge 0, \\k=l \pmod n}}  h_{kl}z^k \bar z^l 
\end{equation}
is a formal series such that 
$h_{kl}=h_{lk}^*$, $h_{n0}=0$, $h_{22} \ne 0$ and
$h_{n+1,1}^2-4h_{22}h_{2n,0}\ne  0$ 
then there exists a formal canonical change of variables which
transforms the Hamiltonian $H$
into
\begin{equation}\label{Eq:NFhn0is0}
\widetilde H(z,\bar z)=\sum_{k\ge 0} a_k {(z\bar{z})}^{k+2}
+\sum_{k\ge 1}(b_k z^{n}+b_k^*\bar z^n) {(z\bar{z})}^{2k}
 +\sum_{k\ge 0} c_k {(z\bar{z})}^{2k} (z^{2n}+\bar z^{2n}),
\end{equation}%
where $a_k, c_k \in \mathbb{R}$, $b_k \in \mathbb{C}$.
Moreover, $a_0=h_{22}$ and 
$c_0=\left|h_{2n,0}-\frac{h_{n+1,1}^2}{4h_{22}}\right|$.
\end{proposition}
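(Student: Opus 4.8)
The plan is to eliminate monomials from $H$ in order of increasing total degree $d=k+l$, using canonical changes $\Phi^1_{\chi}$ generated by resonant Hamiltonians $\chi$ of the same degree; this is the standard Takens/Lie-series normal form procedure, so the linear (homological) operator at each degree is $\{\,\cdot\,,\,H_{\mathrm{quad}}\}$ — but here there is no quadratic part, so the role of the "unperturbed" Hamiltonian is played by the lowest surviving terms. The key structural observation is that after passing to polar coordinates $(I,\varphi)$ via \eqref{Eq:zpolar}, the Hamiltonian \eqref{Eq:hindiv} becomes \eqref{Eq:inter} (with $a_k=0$ for the $I^{k+2}$ block replaced appropriately), and the target \eqref{Eq:NFhn0is0} is exactly the statement that in $(I,\varphi)$ only the harmonics $\cos(n\varphi+\cdot)$ and $\cos(2n\varphi)$ survive, with moreover the $\cos(n\varphi)$-block carrying only even powers of $I$ beyond the $I^{n/2}$ prefactor and the $\cos(2n\varphi)$-block only even powers beyond $I^n$. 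So the work splits into two tasks: (a) kill all harmonics $\cos(jn\varphi+\cdot)$ with $j\ge 3$; (b) within the $j=1$ and $j=2$ harmonics, kill the "wrong-parity" coefficients; and (c) compute the two leading coefficients $a_0,c_0$ explicitly.

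**First I would** set up the homological equation. A change $\Phi^1_\chi$ with $\chi=\sum \chi_{kl}z^k\bar z^l$ resonant ($k=l\bmod n$) transforms $H\mapsto H+\{\chi,H\}+\tfrac12\{\chi,\{\chi,H\}\}+\cdots$. At the first degree $d_0$ where $H$ differs from the desired form, $\{\chi,H\}$ at degree $d_0$ equals $\{\chi,H_{\mathrm{low}}\}$ where $H_{\mathrm{low}}$ is the already-normalized part of lowest degree — degree $4$, namely $h_{22}(z\bar z)^2=a_0 I^2$ (nonzero by hypothesis $h_{22}\ne0$), plus, once we reach degrees $\ge n/2+1$, the term $b_1(\ldots)$ and, at degrees $\ge n$, the $c_0(z^{2n}+\bar z^{2n})$ term. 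The bracket with $a_0 I^2$ is, in polar coordinates, $\{\chi, a_0 I^2\} = 2a_0 I\,\partial_\varphi \chi$, which acts on a monomial $I^p\cos(jn\varphi+\delta)$ by multiplying it by $-2a_0 j n\, I^p\cdot(\text{sign})$ and shifting the phase — in particular it is invertible on every harmonic $j\ge 1$ as long as we are adjusting a term whose $I$-power is $\ge 2$ relative to... precisely: $\{\chi,a_0I^2\}$ raises the $I$-degree of $\chi$ by $1$, so a generator $\chi\sim I^{q}e^{ijn\varphi}$ kills a term $H\sim I^{q+1}e^{ijn\varphi}$. This lets us remove, degree by degree, all of \eqref{Eq:inter} except: the pure twist part $I^2\sum a_k I^k$ (the $j=0$, which is untouched and already has the right form), the lowest occurrence of each harmonic that cannot be reached (the $j=1$ harmonic at $I$-power exactly $n/2$, and the $j=2$ harmonic at $I$-power exactly $n$). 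Everything with $j\ge 3$, and everything with $j=1,2$ above its base power, is in the image of the homological operator and gets removed — \emph{except} we must be careful that the correction $\{\chi,\{\chi,H_{\mathrm{low}}\}\}$ and brackets with the $b_1,c_0$ terms feed back into lower-priority harmonics; this is handled by the usual induction on degree, since those feedback terms are of strictly higher degree.

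**The parity/phase reduction** (task (b)) is the more delicate bookkeeping. After the above, the $j=1$ part reads $I^{n/2}\sum_{k\ge 0} \beta_k I^{k}\cos(n\varphi+\sigma_k)$ and we must show the odd-$k$ terms (relative to the pattern in \eqref{Eq:NFhn0is0}, i.e. only $(z\bar z)^{2k}$, meaning $I$-power $n/2+2k$ with $k\ge 1$, so the allowed extra powers of $I$ are $0$ is forbidden — wait, $k\ge1$ — and then $2,4,\dots$) can be removed. Here I would use the remaining freedom: generators $\chi$ whose bracket with the twist term $a_0 I^2$ shifts $I$-power by $+1$ can move a $j=1$ term of $I$-power $n/2+r$ to one of power $n/2+r-1$; iterating, the whole $j=1$ tower collapses onto $I$-powers of a single parity class, and the hypothesis $h_{n+1,1}^2-4h_{22}h_{2n,0}\ne 0$ is what guarantees the relevant leading coefficient — specifically $c_0=|h_{2n,0}-h_{n+1,1}^2/(4h_{22})|$, i.e. the effective $\cos 2n\varphi$ amplitude \emph{after} completing the square in the $j=1$ generator to remove the $I^{n/2+1}$ piece — is nonzero, so that the $j=2$ block genuinely starts at $I^n$ and the $j=1$ block at $I^{n/2}$ (with the $I^{n/2+1}$, i.e. odd, term killed by that same square-completion). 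This is exactly where $h_{22}\ne 0$ (to divide) and the discriminant condition (to keep $c_0\ne 0$) enter, and tracking how $\{\chi,a_0I^2\}$, $\{\chi,b_1\text{-term}\}$ and $\{\chi,c_0\text{-term}\}$ interact at the borderline degrees $n/2+1,\ n/2+2,\ n$ is the main obstacle — it is a finite but fiddly computation that pins down $c_0$ via the quadratic $h_{n+1,1}^2-4h_{22}h_{2n,0}$. Once the borderline degrees are cleared correctly, the induction for all higher degrees is routine, and translating \eqref{Eq:NFhn0is0} back from $(I,\varphi)$ to $(z,\bar z)$ (using $I^p\cos(jn\varphi+\delta)\leftrightarrow$ real combinations of $z^{\,p+jn/2}\bar z^{\,p-jn/2}$) gives the stated $z,\bar z$ form with $a_k,c_k\in\mathbb R$, $b_k\in\mathbb C$. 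The identification $a_0=h_{22}$ is immediate since the degree-4 part is untouched.
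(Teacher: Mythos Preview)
Your proposal has a genuine gap: the filtration by total degree $d=k+l$ is the wrong one for this normal form, and with it the induction cannot reach \eqref{Eq:NFhn0is0}.

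With the total-degree filtration, the homological operator at each step is the bracket with the degree-$4$ part only, i.e.\ $\chi\mapsto\{\chi,a_0(z\bar z)^2\}=2a_0 I\,\partial_\varphi\chi$. The term $c_0(z^{2n}+\bar z^{2n})$ has total degree $2n>4$, so for a generator $\chi$ of degree $d_0-2$ the bracket $\{\chi,c_0 z^{2n}\}$ lands in degree $d_0+2n-4>d_0$; it is a higher-order correction, never part of the homological operator at the degree being normalized. But $\{\,\cdot\,,a_0 I^2\}$ preserves each harmonic $j$ separately, and its image in harmonic $j$ consists exactly of the terms with $I$-power $\ge jn/2+1$. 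Hence the base term $I^{jn/2}\cos(jn\varphi+\cdot)$ survives for \emph{every} $j\ge1$: your procedure produces precisely the alternative normal form of Proposition~\ref{Pro:hn0is0case1}, not \eqref{Eq:NFhn0is0}. Your task~(b) cannot repair this, because $\{\,\cdot\,,a_0 I^2\}$ \emph{raises} the $I$-power by one (so you cannot ``move a $j=1$ term of $I$-power $n/2+r$ to one of power $n/2+r-1$'') and it does not couple harmonic $j$ with $j\pm2$, so there is no mechanism to collapse $j\ge3$ onto $j\in\{1,2\}$ or to enforce the parity pattern.

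What the paper does is introduce a weighted grading, the $\delta$-degree
\[
\delta(z^k\bar z^l)=\tfrac12(k+l)-\tfrac{n-2}{2n}\,|k-l|,
\]
designed so that $z^2\bar z^2$ and $z^{2n}$ both have $\delta$-degree $2$. Then the leading block is $h_2=a_0(z\bar z)^2+c_0(z^{2n}+\bar z^{2n})$ (after the preliminary change that kills $h_{n+1,1}$ and produces $c_0=|h_{2n,0}-h_{n+1,1}^2/(4h_{22})|$), and the homological operator $L(\chi)=[-i\{h_2,\chi\}]_p$ acts on a harmonic-$j$ monomial $Q_{p-1,j}$ of $\delta$-degree $p-1$ by
\[
L(Q_{p-1,j})=2ia_0 nj\,Q_{p,j}-2inc_0(p-1-j)\,Q_{p,j+2}.
\]
The crucial $j\leftrightarrow j+2$ coupling, available exactly because $c_0\ne0$ (this is where the discriminant hypothesis enters), is what allows one to sweep upward using the $a_0$-part and downward using the $c_0$-part within each parity class of $j$, leaving only $j\in\{0,1,2\}$ with the stated parity of $(z\bar z)$-powers. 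Without a filtration that places $a_0$ and $c_0$ at the same level, this coupling never appears in the linearized problem and the argument stalls.
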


\begin{proof} The key idea behind the proof is based on studying terms
of formal power series in the order of their $\delta$-degree. For a resonant monomial ($z^kz^l$, $k=l \mod n $) we define its $\delta$-degree
by
\begin{equation}\label{Eq:deltahn0is0}
\delta(z^k\bar z^l)=
\left|\frac{k-l}n\right|+\min\{\,k,l\,\}
=
\frac{1}{2}(k+l)-\frac{n-2}{2n}|k-l|\,.
\end{equation}

Grouping in (\ref{Eq:hindiv}) terms of the same $\delta$-degree we get
\[H(z,\bar z)=\sum_{m\ge 2} h_m(z,\bar z), \]
where $h_m(z,\bar z)$ is a homogeneous 
resonant polynomial of 
$\delta$-degree $m$. 

The term of the lowest $\delta$-degree is
\[ h_2(z,\bar z)=h_{22}z^2\bar z^2 +h_{n+1,1}z^{n+1}\bar z+h_{1,n+1}z\bar z^{n+1} +h_{2n,0}z^{2n}+h_{0,2n}\bar z^{2n}.\]

Let $\chi$ be a resonant polynomial. After the  substitution
$(z, \bar z ) \to \Phi^1_{\chi}(z, \bar z )$ the Hamiltonian takes the form
\begin{equation}\label{Eq:htransL}
\widetilde H=H+L_{\chi}H+\sum_{k\ge 2}\frac{1}{k!}L^k_{\chi}H\,,
\end{equation}
where
\[
L_{\chi} H=-i\{ H, \chi \} .
\]

The $\delta$-degrees of monomials in Poisson brackets
\[
\{z^{k_1}\bar z^{l_1},z^{k_2}\bar z^{l_2}\}
=(k_1l_2-k_2l_1)z^{k_1+k_2-1}\bar z^{l_1+l_2-1}
\]
satisfy the following relation:
\begin{equation}\label{Eq:Pbdelta}
\delta(z^{k_1+k_2-1}\bar z^{l_1+l_2-1})
\ge \delta (z^{k_1}\bar z^{l_1})
+\delta(z^{k_2}\bar z^{l_2})-1.
\end{equation}

Indeed,
\[ \begin{array}{l}
\delta(z^{k_1+k_2-1}\bar z^{l_1+l_2-1})=
\frac{1}{2}(k_1+k_2+l_1+l_2-2)-
\frac{n-2}{2n}|k_1-l_1+k_2-l_2|\\
\ge \frac{1}{2}(k_1+l_1)-\frac{n-2}{2n}|k_1-l_1|
+\frac{1}{2}(k_2+l_2)-\frac{n-2}{2n}|k_2-l_2|-1.
\end{array}
\]

Let $\chi=\alpha z^n+\alpha^*\bar z^n$
whith $\alpha=
- \dfrac{h_{n+1,1}}{2inh_{22}}$.
It is not difficult to see that
after the  substitution
$(z, \bar z ) \mapsto \Phi^1_{\chi}(z, \bar z )$
 the coefficients
of Hamiltonian
$\widetilde H=\sum_{k+l\ge 4}\tilde h_{kl}z^k\bar z^l$ of
$\delta$-degree 2 are
\[\left\{ \begin{array}{l} 
\tilde h_{22}=h_{22}, \\
\tilde h_{n+1,1}=h_{n+1,1}+2in\alpha h_{22}=0,\\
\tilde h_{2n,0}=
h_{2n,0}+in\alpha h_{n+1,1}-n^2\alpha^2h_{22}
=h_{2n,0}-\frac{h_{n+1,1}^2}{4h_{22}}. 
\end{array} \right. 
\]

After rotation $z \mapsto e^{-i\frac{\arg \tilde h_{2n,0}}{2n}}z$
the coefficient $\tilde h_{2n,0}\mapsto c_0= |\tilde h_{2n,0}|$.
Then in new variables
\begin{equation}\label{Eq:h2nf}
h_2(z,\bar z)=a_0z^2\bar z^2 +c_0(z^{2n}+\bar z^{2n}),
\end{equation}
where $a_0=h_{22}$, $c_0=\left|h_{2n,0}-\frac{h_{n+1,1}^2}{4h_{22}}\right|$. So the terms of $\delta$-degree 2
has the declared form.

We proceed by induction.
Let $h_m(z,\bar z)$ has the declared form for $m<p$, namely
\begin{eqnarray}\label{Eq:h_m}
h_{2j+2}(z,\bar z)=a_{2j}{(z\bar z)}^{2j+2}+c_j{(z\bar z)}^{2j}(z^{2n}+\bar z^{2n}),\quad {\rm for\ }0\le j \le 
\left\lfloor \frac{p-3}{2}\right\rfloor ,\\
h_{2j+1}(z,\bar z)=a_{2j-1}{(z\bar z)}^{2j+1}+{(z\bar z)}^{2j}(b_jz^{n}+b_j^*\bar z^{n}),\quad {\rm for\  }1\le j \le 
\left\lfloor \frac{p-2}{2}\right\rfloor .
\end{eqnarray}

For the sake of receiving such form for the term of 
$\delta$-degree $p$ we use several changes
of variables consequently.
Any homogeneous polynomial $\chi $ of $\delta$-degree $p-1\ge 2$ generates 
the change of variables $(z, \bar z ) \mapsto \Phi^1_{\chi}(z, \bar z )$. Formulae (\ref{Eq:htransL}) and (\ref{Eq:Pbdelta}) imply
\[\tilde h_m=h_m\quad {\rm for\  } m\le p-1 \]
and for $m=p$
\[ \tilde h_p=h_p+ L(\chi ).\] 
Here $L(\chi )$ is the homological operator: 
\begin{equation}\label{Eq:homop}
L(\chi )=\left[  L_\chi h_2 \right]_p ,
\end{equation} 
where $h_2$ is determined by (\ref{Eq:h2nf})
and $[\cdot ]_p$ denotes terms of 
$\delta$-degree $p$.
 
Let $p$ be $\delta$-degree of resonant monomial $z^k\bar z^l$ and $j=\frac{k-l}{n}$. Let
\[ Q_{p0}(z,\bar z)=z^p\bar z^p\]
and for $1\le j\le p$
\[ Q_{pj}(z,\bar z)=z^{p+j(n-1)}\bar z^{p-j}, \quad
Q_{p,-j}(z,\bar z)=z^{p-j}\bar z^{p+j(n-1)}.\]
The homological operator (\ref{Eq:homop}) acts on monomial 
$Q_{p-1,j}(z,\bar z)$
of $\delta$-degree $p-1$ with $j>0$ by
\[L(Q_{p-1,j})=2ia_0njQ_{pj}
-2inc_0(p-1-j)Q_{p,j+2}  \]
and for $j=0$:
\[ L(Q_{p-1,0})=-2inc_0(p-1)
(Q_{p,2}-Q_{p,-2}).
\]

We denote coefficients of resonant polynomials
$h_p(z,\bar z)$ and $\tilde h_p (z,\bar z)$
by $g_j$ and $\tilde g_j$ respectively:
\[ h_p=g_0Q_{p0}+\sum_{1\le j\le p}
(g_jQ_{pj}+g_j^*Q_{p,-j}),
\quad  \tilde h_p=\tilde g_0Q_{p0}+\sum_{1\le j\le p}
(\tilde g_jQ_{pj}+\tilde g_j^*Q_{p,-j}).\]

The change of variables generated by resonant polynomial
$\chi_k$ of $\delta$-degree $p-1$
\begin{equation}\label{Eq:chij}
\begin{array}{lr}
\chi_k=\alpha_kQ_{p-1,k}+\alpha_k^*Q_{p-1,-k} ,\,& 1 \le k \le p-1, \\
\chi_0=\alpha_0Q_{p-1,0}\,  \end{array} 
\end{equation}
transforms coefficients $g_j$ by the following way:
\[ \left\{
\begin{array}{l}
\tilde g_j = g_j, \qquad j\notin \{ k,k+2 \},
\\ \tilde g_k =g_k +2ia_0nk\alpha_k ,\\
\tilde g_{k+2}=g_{k+2}-2inc_0(p-1-k)\alpha_k .
\end{array} \right. \]
Below we describe the order of this transformations
wich provides the declared form of the terms of $\delta$-degree $p$.

Let $k_0=1$ for even $p$ and $k_0=2$ for odd $p$.
Change of variables generated by $\chi_k$  (\ref{Eq:chij}) 
with $\alpha_k=-\frac{g_k}{2ia_0kn}$
eliminates $g_k$, changes $g_{k+2}$ and does not affect 
any other terms of $\delta$-degree $p$:
\begin{equation*}
\left\{
 \begin{array}{l}
 \tilde g_j=g_j \, ,\quad j\not\in \{ k,k+2 \} ,\\
\tilde g_k=0 , \\
\tilde g_{k+2}=g_{k+2} + \frac{c_0(p-1-k)}{ka_0}g_k\, 
. \end{array} 
\right.
\end{equation*}
Starting with $k=k_0$, then $k=k_0+2$ {\it etc.} 
proceeding  up to $k=p-1$ one get corresponding
terms $\tilde g_k=0$ for all $k\ge k_0$ for which
$k-k_0$ is even.

To eliminate terms with odd $k-k_0$ we use 
changes of variables generated by $\chi_k$ (\ref{Eq:chij}) 
with
\begin{equation*}\begin{array}{lr}
\alpha_k= \frac{g_{k+2}}{2inc_0(p-k+1)} ,\,& 1\le k \le p-2, \\
\alpha_0=\frac{\Im g_2}{2nc_0(p-1)}\, . \end{array} 
\end{equation*}
Each of these substitutions eliminates $g_{k+2}$
(for $k=0$ only image part of $g_2$ eliminates),
changes $g_k$ and does not affect 
any other terms of $\delta$-degree $p$:
\begin{equation*}
 \left\{\begin{array}{l}
\tilde g_j=g_j \, ,\quad j\not\in \{ k,k+2 \},\\ \tilde g_{k+2}=0 ,\, \quad ({\mbox{for} \ } k=0 \quad 
\Im \tilde g_2=0),\\
\tilde g_k=g_{k} + \frac{ka_0g_{k+2}}{c_0(p-k+1)}\, 
 .\end{array} 
\right.
\end{equation*}
Starting from $k=p-2$, then $k=p-4$ {\it etc.} down to
$k=k_0-1$ we get corresponding $\tilde g_{k+2}=0$ and 
$\tilde g_2 \in\mathbb{R}$.
\end{proof}

{\bf Remark about uniqueness.}
The kernel of the homological operator is empty for odd $p$ and one-dimensional for even $p$.
So only $[h_2^k]_{2k} \in \ker L $ which implies uniqueness of the coefficients $a_k$, $c_k$, $|b_k|$ and 
$\arg b_k \pmod \pi $ in formula (\ref{Eq:NFhn0is0}). 
But rotation by $\pi /n$  
change $\arg b_k \mapsto \arg b_k +\pi$.


An alternative normal form is derived in the next 
proposition.
\begin{proposition} \label{Pro:hn0is0case1}
If  
\begin{equation}\label{Eq:h_original1}
H(z,\bar z)=a_0z^2\bar z^2+
\sum_{\substack {k+l \ge 5 ,\ k,l\ge 0, \\k=l \pmod n}}  h_{kl}z^k \bar z^l 
\end{equation}
is a formal series such that $h_{kl}=h_{lk}^*$
and $a_0\ne 0$,
then there exists a formal tangent-to-identity canonical change of variables which
transforms the Hamiltonian $H$
into
\begin{equation}\label{Eq:hnois0}
\widetilde H(z,\bar z)=(z\bar{z})^{2}\sum_{k\ge 0} a_k {(z\bar{z})}^k
+\sum_{k\ge 1} (c_k  z^{nk}+c_k^*\bar z^{nk}),
\end{equation}%
where $a_k \in \mathbb{R}$, 
$c_k\in\mathbb C$.
\end{proposition}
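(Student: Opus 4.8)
The plan is to mimic the inductive $\delta$-degree argument of Proposition~\ref{Pro:hn0is0}, but with a different $\delta$-degree adapted to the fact that now only $a_0\ne0$ is assumed (no condition on the $z^n$-coefficient or on the quadratic resonant term $z^{2n}$). Since the leading part is $h_2(z,\bar z)=a_0 z^2\bar z^2$, the natural choice is to declare the $\delta$-degree of a resonant monomial $z^k\bar z^l$ (with $k=l\bmod n$) to be $\delta(z^k\bar z^l)=\min\{k,l\}$, so that the pure angle-power terms $z^{nk}$ and $\bar z^{nk}$ all have $\delta$-degree $0$, the term $z^2\bar z^2$ has $\delta$-degree $2$, and a monomial $z^{nk+j}\bar z^{j}$ has $\delta$-degree $j$. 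One checks, exactly as in \eqref{Eq:Pbdelta}, that a Poisson bracket $\{z^{k_1}\bar z^{l_1},z^{k_2}\bar z^{l_2}\}$ has $\delta$-degree $\ge \delta_1+\delta_2-1$ when neither factor is a pure power $z^{m}$ or $\bar z^{m}$, and $\ge\delta_1+\delta_2$ when one of them is such a pure power (because then that factor contributes $\min=0$ but removes one unit of $\min$ from the other only if the exponents overlap on the correct side; the harmless case is absorbed into the inductive step). After grouping $H=\sum_{m\ge0}h_m$ by $\delta$-degree, the target form \eqref{Eq:hnois0} says: in $\delta$-degree $0$ we keep $\sum_{k\ge1}(c_kz^{nk}+c_k^*\bar z^{nk})$, in $\delta$-degree $1$ and in every odd $\delta$-degree we keep nothing, and in each even $\delta$-degree $2j$ we keep only $a_{j-2}(z\bar z)^{j}$ for $j\ge2$.

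The induction on $p=\delta$-degree proceeds as in the earlier proof. Assume $h_m$ is already in the declared form for $m<p$; we seek a homogeneous resonant polynomial $\chi$ of $\delta$-degree $p-1$ (for $p\ge2$) such that $\widetilde h_p=h_p+L(\chi)$ has the declared form, where $L(\chi)=[L_\chi h_2]_p=[-i\{h_2,\chi\}]_p$ and $h_2=a_0 z^2\bar z^2$. A short computation gives, for the basis monomials $Q_{p-1,j}$ of $\delta$-degree $p-1$ (with the same notation $Q_{pj}=z^{p+j(n-1)}\bar z^{p-j}$ as before, adapted so that the subscript $p$ now records the $\min$-degree), the action
\[
L_{\chi}\bigl(a_0 z^2\bar z^2\bigr)\ \text{shifts}\ Q_{p-1,j}\ \longmapsto\ \text{const}\cdot a_0\, j\, Q_{p,j},
\]
so that $L$ is diagonal in this basis with eigenvalue proportional to $a_0\,j$. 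Hence every monomial of $\delta$-degree $p$ with $j\ne0$ can be killed by choosing the corresponding coefficient of $\chi$ (this uses $a_0\ne0$), and the monomials with $j=0$, i.e. $z^p\bar z^p$, lie in $\ker L$ and must be kept — giving the term $a_{p-2}(z\bar z)^{p}$ for even $p\ge4$, while for odd $p$ one checks that $z^p\bar z^p$ is not a resonant monomial of the relevant parity unless $n$ divides $0$, so nothing survives; the pure powers $z^{nk},\bar z^{nk}$ never occur at $\delta$-degree $\ge1$ and so are left untouched in $\delta$-degree $0$. Choosing $\chi$ tangent-to-identity at each step (it is automatically of $\delta$-degree $\ge1$, hence of ordinary degree $\ge3$) makes the whole composition a formal tangent-to-identity canonical change.

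The one genuine subtlety — the ``hard part'' — is the bookkeeping of the exceptional Poisson brackets involving the pure powers $z^m$ and $\bar z^m$: when $\chi$ contains such a term, $\{h_2,\chi\}$ can in principle land back among lower-$\delta$-degree terms, so one must verify that with the $\min$-based $\delta$-degree this never happens, i.e. that brackets with a pure power are $\delta$-degree \emph{non-decreasing} up to the $-1$ already accounted for, and in fact that the homological equation at level $p$ only ever needs $\chi$ built from the non-pure monomials $Q_{p-1,j}$ with $1\le j\le p-1$ (together with, for the reality normalisation, a purely imaginary multiple of $Q_{p,0}$ handled exactly as in Proposition~\ref{Pro:hn0is0}). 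Once this is checked, the eigenvalue computation $L(Q_{p-1,j})=2ia_0nj\,Q_{pj}$ finishes the induction: each step removes one off-diagonal coefficient without disturbing the already-normalised ones, the order of elimination being immaterial here because $L$ is diagonal (unlike in Proposition~\ref{Pro:hn0is0}, where the coupling to $Q_{p,j+2}$ forced a specific sweep order). Uniqueness of $a_k$ and of $\{c_k\}$ follows from $\ker L$ being spanned exactly by the powers of $h_2$ together with the $\delta$-degree-$0$ pure-power terms, which are themselves unconstrained.
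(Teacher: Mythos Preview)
Your eigenvalue computation is right and is exactly what the paper uses: with $h_2=a_0z^2\bar z^2$ one has $-i\{a_0z^2\bar z^2,\alpha z^k\bar z^l\}=2ia_0(k-l)\alpha\,z^{k+1}\bar z^{l+1}$, so the homological operator is diagonal on monomials with eigenvalue $2ia_0(k-l)$, and any $z^{k+1}\bar z^{l+1}$ with $k\ne l$ and $k,l\ge0$ can be removed. What goes wrong is the filtration you build the induction on.

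With $\delta(z^k\bar z^l)=\min\{k,l\}$, the pure powers $z^{jn}$ that you want to \emph{keep} sit at $\delta$-degree $0$, strictly below the leading term $a_0z^2\bar z^2$ at $\delta$-degree $2$. Consequently, when at stage $p$ you apply $\Phi^1_\chi$ with $\chi$ of $\delta$-degree $p-1$ (say containing $\alpha\,z^{p-1+j'n}\bar z^{\,p-1}$), the bracket with the already-normalised $h_0=\sum_j(c_jz^{jn}+c_j^*\bar z^{jn})$ gives
\[
\{c_jz^{jn},\alpha z^{p-1+j'n}\bar z^{\,p-1}\}=jn(p-1)c_j\alpha\,z^{(j+j')n+p-2}\bar z^{\,p-2},
\]
which has $\min$-degree $p-2$. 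This is an off-diagonal monomial at $\delta$-degree $p-2$, so it un-normalises the level you had already declared finished. The ``hard part'' you flag is therefore not a bookkeeping detail: the inequality you need ($\ge\delta_1+\delta_2$ when one factor is a pure power) is simply false here, and the induction as written is circular. A secondary symptom of the same problem is that your description of the target at each $\delta$-level is off: $(z\bar z)^p$ has $\min$-degree $p$, not $2p$, and $z^p\bar z^p$ is resonant for every $p$, so a diagonal term survives at \emph{every} $\delta$-degree $p\ge2$, not only at even ones.

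The paper avoids all of this by running the induction on ordinary total degree $k+l$. Since $a_0z^2\bar z^2$ is the unique term of lowest total degree ($=4$), using $\chi=\alpha z^k\bar z^l+\alpha^*z^l\bar z^k$ of total degree $k+l$ changes nothing below degree $k+l+2$, and at degree $k+l+2$ affects only $h_{k+1,l+1}$ (and its conjugate). One then sets $\alpha=-h_{k+1,l+1}/(2ia_0(k-l))$ and sweeps through the monomials by increasing $k+l$, leaving precisely the diagonal $h_{pp}$ and the pure powers $h_{nk,0}$ (the latter unreachable because the required $\chi$ would need a negative exponent). Your diagonal-eigenvalue observation is exactly this argument; only the grading needs to be replaced by total degree.
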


\begin{proof} 
Let $\chi (z, \bar z )=\alpha z^k \bar z^l
+\alpha^*z^l \bar z^k$, $k > l$.  After the  substitution
$(z, \bar z ) \to \Phi^1_{\chi}(z, \bar z )$ the Hamiltonian takes the form (\ref{Eq:htransL}),
where
\[
L_{\chi} H=-i\{ H, \chi \} 
=2ia_0\alpha (k-l)z^{k+1}\bar z^{l+1}
-2ia_0\alpha^* (k-l)z^{l+1}\bar z^{k+1} +O_{k+l+3},
\]
where $O_{k+l+3}$ denotes terms of degree ${k+l+3}$
and higher.
Then
$\tilde h_{st}=h_{st}$  for $ 4\le s+t\le k+l+1$ and 
for $s+t=k+l+2$ if $(s,t)\ne (k+1,l+1)$.
And
\[
\tilde h_{k+1,l+1}=h_{k+1,l+1}+2ia_0\alpha (k-l) \,.
\]

Let $\alpha = - \dfrac{h_{k+1,l+1}}{2ia_0(k-l)}$. 
Then $\tilde h_{k+1,l+1}=0$.

Repeating this substitutions one gets by induction $\tilde h_{kl}=0$ with the exception of $\tilde h_{kk}$ and $\tilde h_{nk,0}=\tilde h^*_{0,nk}$.

\end{proof}


\section{Simplification of a formal Hamiltonian for families with small $h_{11}$ and $h_{n0}$\label{Se:Familieshn0}} 

Now we consider the three-parametric unfolding of Hamiltonian (\ref{Eq:hindiv}):
\begin{equation}\label{Eq:hfam}
H(z,\bar z; \varepsilon ,\mu_1,\mu_2 )=
\varepsilon z\bar z+
\sum_{\substack{k+l\ge 3, m \ge 0 \cr k=l\pmod n}} 
h_{klm}(\mu_1,\mu_2 )z^k \bar z^l \varepsilon^m,
\end{equation}
where $h_{klm}(\mu_1,\mu_2 )
=h_{lkm}^*(\mu_1,\mu_2 )$, $h_{n00}(0,0)=0$,
$ h_{220}(\mu_1,\mu_2 )\ne 0$,
\[ h_{n+1,1,0}^2(\mu_1,\mu_2 )-
4h_{220}(\mu_1,\mu_2 )h_{2n,0,0}(\mu_1,\mu_2 )\ne 0. \]

The purpose of the section is to prove that after some canonical formal transformations
Hamiltonian (\ref{Eq:hfam}) takes the form (\ref{Eq:hfinalC}). An alternative form (\ref{Eq:hfamalt})
is also derived.

As in Section~\ref{Se:hn0is0} $\delta$-degree is defined 
by (\ref{Eq:deltahn0is0}). Then
\begin{equation}\label{Eq:hfamgroop}
H(z,\bar z; \varepsilon ,\mu_1,\mu_2 )=
\sum_{p\ge 1, m\ge 0} 
h_{(p)m}(z,\bar z;\mu_1,\mu_2 ) \varepsilon^m,
\end{equation}
where $h_{(p)m}$ is homogeneous resonant polynomial
on $(z,\bar z)$ of 
$\delta$-degree $p$ with coefficients depending on
$(\mu_1,\mu_2)$. The terms of $\delta$-degree 1 are
\begin{equation}\label{Eq:h1fam}
\begin{array}{l}
h_{(1)1}(z,\bar z;\mu_1,\mu_2 )=z\bar z+h_{n01}(\mu_1,\mu_2 )z^n
+h_{n01}^*(\mu_1,\mu_2 )\bar z^n,  \\
h_{(1)m}(z,\bar z;\mu_1,\mu_2 )=h_{n0m}(\mu_1,\mu_2 )z^n
+h_{n0m}^*(\mu_1,\mu_2 )\bar z^n, \quad m\ne 1 .
\end{array}
\end{equation}

The all transformations described below 
 preserve the form of $h_{(1)m}$ 
(i.e. $h_{110}=1$ and $h_{11m}=0$ for $m\ne 1$) 
 although coefficients $h_{n0m}$
are changed.

As for individual map, normalisation of terms of 
$\delta$-degree $p=2$ is different from $p\ge 3$.
So it is convenient to consider the case of $p=2$
separately.

\begin{lemma}
Let $H(z,\bar z; \varepsilon ,\mu_1,\mu_2 )$
be a formal series (\ref{Eq:hfamgroop})
with $h_{(1)m}(z,\bar z;\mu_1,\mu_2 )$ as in 
(\ref{Eq:h1fam}) and
\[  h_{(2)m}(z,\bar z;\mu_1,\mu_2 )= 
h_{22m}(\mu_1,\mu_2 )z^2\bar z^2
+ h_{n+1,1,m}(\mu_1,\mu_2 )z^{n+1}\bar z 
 \]
\[+ h_{n+1,1,m}^*(\mu_1,\mu_2 )z\bar z^{n+1}
+ h_{2n,0,m}(\mu_1,\mu_2 )z^{2n}
+h_{2n,0,m}^*(\mu_1,\mu_2 )\bar z^{2n}. \]
If $h_{220}(\mu_1,\mu_2 )\ne 0$ and 
$h_{n+1,1,0}^2(\mu_1,\mu_2 )-
4h_{220}(\mu_1,\mu_2 )h_{2n,0,0}(\mu_1,\mu_2 )\ne 0$ then
there exists a formal canonical change of variables which
transforms the Hamiltonian $H$ 
into $\widetilde H= \sum_{p\ge 1, m \ge 0} \tilde h_{(p)m}\varepsilon^m$ with
\begin{equation}\label{Eq:nuhn0}
\tilde h_{n00}(\mu_1,\mu_2 )
=
h_{n00} (\mu_1, \mu_2)\exp \left( -\frac{i}{2}
\arg \left( h_{2n,0,0}(\mu_1, \mu_2)-\frac{h_{n+1,1,0}^2(\mu_1, \mu_2)}{4h_{220}(\mu_1, \mu_2)}   \right) \right),
\end{equation} 
 $\tilde h_{11m}(\mu_1,\mu_2 )=h_{11m}(\mu_1,\mu_2 )=0$ for $m\ne 1$,
$\tilde h_{111}(\mu_1,\mu_2 )=h_{111}(\mu_1,\mu_2 )=1$,\begin{equation}\label{Eq:h2}
\tilde h_{(2)m}(z,\bar z;\mu_1,\mu_2 )
=\tilde h_{22m}(\mu_1,\mu_2 )z^2\bar z^2 
+\tilde h_{2n,0,m}(\mu_1,\mu_2 )(z^{2n}+\bar z^{2n}),
\end{equation}
where $\tilde h_{2n,0,m}(\mu_1,\mu_2 ) \in \mathbb{R}$
for all $m \ge 0$,

$\tilde h_{2n,0,0}(\mu_1,\mu_2)=
\left| h_{2n,0,0}(\mu_1,\mu_2)-
\frac{h_{n+1,1,0}^2(\mu_1,\mu_2)}{4h_{220}(\mu_1,\mu_2)}
\right|$.
\end{lemma}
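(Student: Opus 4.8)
The plan is to transplant, into the parametric setting, the treatment of the $\delta$-degree $2$ terms in the proof of Proposition~\ref{Pro:hn0is0}, handling the extra variable $\varepsilon$ by solving all homological equations order by order in $\varepsilon$. Two structural remarks keep the bookkeeping finite. First, a canonical change $\Phi^1_{\chi}$ generated by a $(\mu_1,\mu_2)$- and $\varepsilon$-dependent resonant polynomial $\chi$ preserves the resonance condition $k\equiv l\pmod n$, and by (\ref{Eq:htransL})--(\ref{Eq:Pbdelta}) it cannot lower $\delta$-degree; consequently $\tilde h_{(1)m}$ and $\tilde h_{(2)m}$ stay supported on the finitely many monomials of $\delta$-degree $1$ (namely $z\bar z$, $z^n$, $\bar z^n$) and of $\delta$-degree $2$ (namely $z^2\bar z^2$, $z^{n+1}\bar z$, $z\bar z^{n+1}$, $z^{2n}$, $\bar z^{2n}$). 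Second, none of the brackets used below can produce the monomial $z\bar z$: bracketing with $z^n$ or $\bar z^n$ strictly raises total degree (as $n\ge 3$), and bracketing with $z\bar z$ acts diagonally on monomials and annihilates $z\bar z$ itself; hence every change of variables below automatically preserves $h_{110}=1$ and $h_{11m}=0$ for $m\ne1$, i.e. the admissible form (\ref{Eq:h1fam}) of $\tilde h_{(1)m}$ (the coefficients $h_{n0m}$ being allowed to change).

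The first transformation is $\Phi^1_{\chi_1}$ with the $\delta$-degree-$1$ generating function $\chi_1=\sum_{m\ge0}(\alpha_m(\mu_1,\mu_2)z^n+\alpha_m^*(\mu_1,\mu_2)\bar z^n)\varepsilon^m$. Its effect on the coefficient of $z^{n+1}\bar z$ at order $\varepsilon^m$ is $\tilde h_{n+1,1,m}=h_{n+1,1,m}+2in\,h_{220}\,\alpha_m+(\text{terms in }\alpha_0,\dots,\alpha_{m-1}\text{ and original coefficients})$, which at $\varepsilon^0$ is exactly the individual-map computation in the proof of Proposition~\ref{Pro:hn0is0}. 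Since $h_{220}\ne0$ this is a triangular system in $m$: one takes $\alpha_0=-h_{n+1,1,0}/(2inh_{220})$ and solves for $\alpha_m$, $m\ge1$, in the same fashion, obtaining $\tilde h_{n+1,1,m}=0$ for all $m$. The same $\varepsilon^0$ computation yields $\tilde h_{2n,0,0}=h_{2n,0,0}-h_{n+1,1,0}^2/(4h_{220})$, which is nonzero by the discriminant hypothesis; the coefficient $\tilde h_{22m}$ becomes some (uncontrolled) quantity, and the $\delta$-degree-$1$ terms retain the form (\ref{Eq:h1fam}) by the remark above.

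The second transformation is the $\varepsilon$-dependent rotation $\Phi^1_{\chi_2}$ with $\chi_2=(\sum_{m\ge0}r_m(\mu_1,\mu_2)\varepsilon^m)z\bar z$, $r_m\in\mathbb{R}$, under which $z^k\bar z^l$ is multiplied by $\exp(-i(k-l)\sum_m r_m\varepsilon^m)$; it fixes $z\bar z$ and keeps the vanishing $\tilde h_{n+1,1,m}=0$. Choosing $r_0=\tfrac1{2n}\arg(h_{2n,0,0}-h_{n+1,1,0}^2/(4h_{220}))$ rotates $\tilde h_{2n,0,0}$ to $|h_{2n,0,0}-h_{n+1,1,0}^2/(4h_{220})|>0$ and, since $z^n$ acquires the factor $e^{-inr_0}$, produces formula (\ref{Eq:nuhn0}) for $\tilde h_{n00}$. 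Then $r_1,r_2,\dots$ are determined recursively in $m$: at order $\varepsilon^m$ the contribution $-i\,2n\,r_m\,|\tilde h_{2n,0,0}|$ is purely imaginary and can be chosen to cancel $\Im\tilde h_{2n,0,m}$ (solvable because $|\tilde h_{2n,0,0}|\ne0$), while $\tilde h_{2n,0,0}$, $\tilde h_{n00}$, $\tilde h_{n+1,1,m}$ and the $\delta$-degree-$1$ normalization are unaffected. Composing the two changes gives the asserted canonical transformation; terms of $\delta$-degree $\ge3$ change freely and are not constrained by the lemma. The only genuinely delicate points are the order-by-order-in-$\varepsilon$ solvability and the verification that each step preserves what the previous one achieved; there is also a minor special feature at $n=3$, where $z^{n-1}\bar z^{n-1}$ coincides with $z^2\bar z^2$ and so feeds into $\tilde h_{22m}$, but this only perturbs the uncontrolled coefficient $\tilde h_{22m}$ and causes no difficulty.
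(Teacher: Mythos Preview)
Your proof is correct and follows essentially the same approach as the paper: eliminate $h_{n+1,1,m}$ with a generating function proportional to $z^n+\bar z^n$, then make $h_{2n,0,m}$ real via a rotation generated by $z\bar z$, solving order by order in~$\varepsilon$. The only difference is organizational---you package all $\varepsilon$-orders into two single generating functions $\chi_1,\chi_2$ and solve the resulting triangular systems, whereas the paper inducts on $m$, applying $\Phi^{\varepsilon^M}_\chi$ separately at each step---but the homological computations, the formula for $\tilde h_{2n,0,0}$, the verification that $z\bar z$ is never produced, and the $n=3$ remark are identical.
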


\begin{proof}
The lemma is proved by induction.
For $m=0$ we use change of variables
$(z, \bar z ) \mapsto \Phi_\chi^{1}(z, \bar z )$
with $\chi =\beta z^n +\beta^* \bar z^n$, $ \beta = -\frac{h_{n+1,1,0}}{2inh_{220}}$.
From (\ref{Eq:htransL}) we obtain 
coefficients of terms of $\delta$-degree 1:
\[\begin{array}{l}
\tilde h_{n0m}=h_{n0m} \quad {\rm for\ }m=0
{\rm \ and\ }m\ge 2 ,\\
\tilde h_{n01}=h_{n01}+in\beta 
=h_{n01}-\frac{h_{n+1,1,0}}{2h_{220}} 
\end{array}\]
and of $\delta$-degree 2, $m=0$ :
\[\begin{array}{l}
\tilde h_{220}=h_{220} \quad {\rm for\ }n\ge 4, \\
\tilde h_{220}=h_{220}-in^2\beta^*h_{300} 
\quad {\rm for\ }n=3 ,\\
\tilde h_{n+1,1,0}=h_{n+1,1,0}+2in\beta h_{220} =0,\\
\tilde h_{2n,0,0}=h_{2n,0,0}+in\beta h_{n+1,1,0}
-n^2\beta^2h_{220}=h_{2n,0,0}-\frac{h_{n+1,1,0}^2}{4h_{220}} \ne 0.
\end{array}\]
Note that $\tilde h_{220}$
is changed only for the case of resonance of the order
$n=3$ but $\tilde h_{220}(0,0)
=h_{220}(0,0)$ as $h_{300}(0,0)=0$.

After rotation 
$z \mapsto z \exp \left( -i 
\frac{\arg \tilde h_{2n,0,0}}{2n} \right) $
the coefficient $\tilde h_{2n,0,0} \mapsto |\tilde h_{2n,0,0} |$. Thus $\tilde h_{(2)0}$ has the form (\ref{Eq:h2}).

Let $h_{(2)m}$ has the form (\ref{Eq:h2})
for all $m\le M-1$. In order to normalise the term $h_{(2)M} $
we use successively two substitutions of the type
$(z,\bar z)\mapsto\Phi_\chi^{\varepsilon^M}(z,\bar z )$.
Then
\[ \widetilde H=H+\varepsilon^M L_\chi H 
+\sum_{k\ge 2} \frac{\varepsilon^{kM}}{k!}L^k_\chi H .\]
First we assume $\chi =\beta z^n +\beta^* \bar z^n$ 
and  then $\chi =\alpha z\bar z$. 
Choosing appropriate value of $\beta$ we eliminate
$h_{n+1,1,M}$ and we get $\Im h_{2n,0,M}=0$ by choosing
$\alpha $.

After the first substitution with
$ \beta = -\frac{h_{n+1,1,M}}{2inh_{220}}$
the coefficients of terms of $\delta$-degree 1 are:
\[\begin{array}{l}
\tilde h_{n0m}=h_{n0m} \quad {\rm for\ }m\ne M+1, \\
\tilde h_{n,0,M+1}=h_{n,0,M+1}+in\beta 
=h_{n,0,M+1}-\frac{h_{n+1,1,M}}{2h_{220}}
\end{array}\]
and for terms of $\delta$-degree 2 for $m \le M-1$ the coefficients are not changed and for  $m=M$:
\[\begin{array}{l}
\tilde h_{22M}=h_{22M} \quad {\rm for\ }n\ge 4 ,\\
\tilde h_{22M}=h_{22M}-in^2\beta^*h_{300} 
\quad {\rm for\ }n=3 ,\\
\tilde h_{n+1,1,M}=h_{n+1,1,M}+2in\beta h_{220}=0,\\
\tilde h_{2n,0,M}=h_{2n,0,M}+in\beta h_{n+1,1,0}=h_{2n,0,M}
-\frac{h_{n+1,1,M}h_{n+1,1,0}}{2h_{220}}.
\end{array}\]

The change $(z, \bar z ) \mapsto \Phi_\chi^{\varepsilon^M}(z, \bar z )$ with $\chi=\alpha z\bar z$ leads to 
$\tilde h_{klm} =h_{klm}$ for $m \le M-1$ and
\[ \tilde h_{klM} =h_{klM} -i\alpha (k-l)h_{kl0}  
\quad {\rm for\ }k>l.\]
So,
\[
\tilde h_{n,0,M}=h_{n,0,M}-in\alpha h_{n00} 
\]
and $\tilde h_{22M}=h_{22M}$, 
$\tilde h_{n+1,1,M}=h_{n+1,1,M}=0$,
\[
\tilde h_{2n,0,M}=h_{2n,0,M}-2in\alpha h_{2n,0,0}.
\]
Choosing $\alpha =\frac{\Im h_{2n,0,M}}{2nh_{2n,0,0}}$ 
we get $\tilde h_{2n,0,M} \in \mathbb{R}$.
\end{proof}

Now we introduce new parameters $(\nu_1 ,\nu_2 )$
instead of $(\mu_1,\mu_2 )$. Let 
\[ \tilde h_{n00}(\mu_1,\mu_2 )=\nu=\nu_1+i\nu_2,\]
where $\tilde h_{n00}(\mu_1,\mu_2 )$ is determined 
by the formula
(\ref{Eq:nuhn0}).  
Let $\mu_1$ and $\mu_2$ be expressed in terms of $(\nu_1,\nu_2 )$
as power series.
Then we get Hamiltonian $H$ in the form of the series 
in five variables $(z,\bar z ;
\varepsilon , \nu_1, \nu_2 )$.
 
Let $\Upsilon =(\varepsilon , \nu_1,\nu_2   ) $, 
$\mathbf{m}=(m_1,m_2,m_3)$, $|\mathbf{m}|=m_1+m_2+m_3$, 
$\Upsilon^{\mathbf{m}}= \varepsilon^{m_1} \nu_1^{m_2}\nu_2^{m_3}$. After collecting 
terms of the same $\delta$-degree Hamiltonian $H$ 
takes the form:
\[
H=\varepsilon z\bar z +\nu z^n +\nu^* \bar z^n
+\sum_{\substack{m_1\ge 1 \cr m_2,m_3\ge 0}}
(b_{0\mathbf{m}}z^n +b^*_{0\mathbf{m}}\bar z^n)
\Upsilon^\mathbf{m}
\]
\begin{equation}\label{Eq:hdeltanu}
+ az^2\bar z^2 +c(z^{2n }+\bar z^{2n})+
\sum_{|\mathbf{m}| \ge 1} (a_{0\mathbf{m}}z^2\bar z^2
+c_{0\mathbf{m}}(z^{2n}+\bar z^{2n}))\Upsilon^\mathbf{m}
\end{equation}
\[ +\sum_{p\ge 3, |\mathbf{m}| \ge 0} 
h_{(p)\mathbf{m}} (z,\bar z)
\Upsilon^\mathbf{m} .\]
Here the terms of $\delta$-degree 1 are in the form
(\ref{Eq:h1fam}) and the terms of $\delta$-degree 2 are already in the form
(\ref{Eq:h2}), $a=h_{220}(0,0)$, 
$c=\left| h_{2n,0,0}(0,0)-
\frac{h_{n+1,1,0}^2(0,0)}{4h_{220}(0,0)}
\right|$.

The next proposition completes transformation of Hamiltonian to the normal form. 
\begin{proposition}\label{Pro:Famhn0is0}
Let (\ref{Eq:hdeltanu})
be a formal series where $h_{(p)\mathbf{m}}$ are 
real-valued resonant polynomials on $(z, \bar z)$ 
of $\delta$-degree $p$ and $ac\ne 0$.
There exists a formal canonical change of variables which
transforms the Hamiltonian $H$
into
\begin{equation}\label{Eq:hfinalC}
\widetilde H(z,\bar z; \Upsilon )=\varepsilon z \bar z +\nu z^n+ \nu^*\bar z^n+a (z\bar{z})^{2}+c(z^{2n}+\bar z^{2n})
+(z\bar{z})^{2}\sum_{k+|\mathbf{m}|\ge 1} a_{k\mathbf{m}} {(z\bar{z})}^k \Upsilon^\mathbf{m}
\end{equation}
\begin{equation*}
+\sum_{\substack{k+m_1\ge 1 \cr m_2,m_3\ge 0}} {(z\bar{z})}^{2k} (b_{k\mathbf{m}}z^{n}+b_{k\mathbf{m}}^*\bar z^n) \Upsilon^\mathbf{m}
+ (z^{2n}+\bar z^{2n})  \sum_{k+|\mathbf{m}|\ge 1} c_{k\mathbf{m}} {(z\bar{z})}^{2k} \Upsilon^\mathbf{m} ,
\end{equation*}
where $a_{k\mathbf{m}}, c_{k\mathbf{m}} \in \mathbb{R}$, 
$b_{k\mathbf{m}} \in \mathbb{C}$.
\end{proposition}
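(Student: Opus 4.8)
The plan is to imitate the proof of Proposition~\ref{Pro:hn0is0}, working degree by degree in the $\delta$-degree filtration defined by (\ref{Eq:deltahn0is0}), but now carrying along the extra bookkeeping variables $\Upsilon = (\varepsilon,\nu_1,\nu_2)$ as formal parameters. One sets $H=\sum_{p\ge 1,\,|\mathbf m|\ge 0} h_{(p)\mathbf m}(z,\bar z)\,\Upsilon^{\mathbf m}$ and normalises the blocks of $\delta$-degree $p$ in increasing order of $p$, and within each $p$ in increasing order of $|\mathbf m|$. The key point, exactly as in the individual-map case, is that the "leading" part of the Hamiltonian that drives the homological equation is the $\delta$-degree-$1$ and $\delta$-degree-$2$ part. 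Here, however, that leading part is no longer just $h_2 = a(z\bar z)^2 + c(z^{2n}+\bar z^{2n})$ but also includes the terms $\varepsilon z\bar z + \nu z^n + \nu^*\bar z^n$ of $\delta$-degree $1$. I would note first that these $\delta$-degree-$1$ terms must be treated as formally "higher order" in the sense that $\varepsilon$ and $\nu$ are small: this is the standard device in the families setting, where $\Upsilon$-monomials raise the effective order. Concretely, when a generating polynomial $\chi$ of $\delta$-degree $p-1$ multiplied by some $\Upsilon^{\mathbf m}$ is applied via $\Phi^1_\chi$, the Poisson bracket with $h_2$ contributes to $\delta$-degree $p$ at the same $\Upsilon$-order, while the bracket with the $\delta$-degree-$1$ terms contributes to $\delta$-degree $p$ but at strictly higher $\Upsilon$-order (one extra factor of $\varepsilon$ or $\nu$); hence these corrections can be absorbed by induction and do not interfere with the leading homological operator.

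The core computation is therefore the same homological operator $L(\chi) = [L_\chi h_2]_p$ from (\ref{Eq:homop}), acting on the monomials $Q_{p-1,j}$, with the action formulas
\[
L(Q_{p-1,j}) = 2ia\,nj\,Q_{pj} - 2inc(p-1-j)\,Q_{p,j+2},\qquad
L(Q_{p-1,0}) = -2inc(p-1)(Q_{p,2}-Q_{p,-2}),
\]
precisely as in the proof of Proposition~\ref{Pro:hn0is0} (with $a_0,c_0$ replaced by $a,c$). Since $ac\ne0$, the same two-pass elimination scheme works verbatim for each fixed $\Upsilon^{\mathbf m}$: in the first pass, generating polynomials $\chi_k = \alpha_k Q_{p-1,k}+\alpha_k^*Q_{p-1,-k}$ with $\alpha_k = -g_{k\mathbf m}/(2ia k n)$ kill $g_{k\mathbf m}$ for $k\ge k_0$ with $k-k_0$ even, propagating corrections only into $g_{k+2}$; in the second pass one uses $\alpha_k = g_{k+2,\mathbf m}/(2inc(p-k+1))$ (and $\alpha_0 = \Im g_{2\mathbf m}/(2nc(p-1))$) to kill the remaining $g_{k+2,\mathbf m}$ and to make $g_{2\mathbf m}$ real. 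The surviving coefficients are exactly those of $Q_{p0}$ (the $(z\bar z)^p$ term, giving the $a_{k\mathbf m}$ with $2k+4=$ appropriate degree or the $b_{k\mathbf m}$ term), and $Q_{p,\pm 2}$ (the $(z\bar z)^{\bullet}(z^{2n}+\bar z^{2n})$ term with real coefficient, giving $c_{k\mathbf m}$), plus the protected $\delta$-degree-$1$ terms $\nu z^n$ and $b_{k\mathbf m}(z\bar z)^{2k}z^n$ — which matches the claimed form (\ref{Eq:hfinalC}).

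One must also check that the changes of variables preserve the normalisations already achieved: the lemma above fixed the form (\ref{Eq:h2}) of the $\delta$-degree-$2$ block and the constraint $h_{110}=1$, $h_{11m}=0$ for $m\ne 1$, and one verifies that every generator $\chi$ of $\delta$-degree $\ge 2$ used in the present proposition has trivial bracket with the $\delta$-degree-$1$ and $\delta$-degree-$2$ terms at the relevant order, so those blocks are untouched; this is the content of the relation (\ref{Eq:Pbdelta}) together with the $\Upsilon$-order shift remarked above. I expect the main obstacle to be purely organisational: getting the double induction (outer on $p$, inner on $|\mathbf m|$) set up so that when one normalises the block $(p,\mathbf m)$, all corrections land either in blocks $(p,\mathbf m')$ with $|\mathbf m'|>|\mathbf m|$ (handled later in the inner loop) or in blocks $(p',\mathbf m')$ with $p'>p$ (handled later in the outer loop), and in particular never destroy an earlier-normalised block. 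Once the filtration and the $\Upsilon$-order accounting are stated cleanly, the algebra is a routine transcription of Proposition~\ref{Pro:hn0is0}.
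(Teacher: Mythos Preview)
Your reduction to the homological operator of Proposition~\ref{Pro:hn0is0} is correct, and the two-pass elimination scheme you describe is exactly what the paper uses at each step. The gap is in the filtration. Your claim that ``the bracket with the $\delta$-degree-$1$ terms contributes to $\delta$-degree $p$'' is false: since $\{z\bar z, z^k\bar z^l\}=(l-k)z^k\bar z^l$, bracketing a generator $\chi$ of $\delta$-degree $p-1$ with $\varepsilon z\bar z$ produces a term of $\delta$-degree exactly $p-1$ (with $\Upsilon$-order raised by one). In your lexicographic scheme ``outer on $p$, inner on $|\mathbf m|$'', this correction lands in the block $(p-1,\mathbf m')$, which was fully normalised at the previous outer step and is now destroyed. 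The same happens at every $p\ge 3$: each outer step undoes the previous one. So the induction does not close, and the obstacle is not ``purely organisational'' in the sense of a routine relabelling---your ordering is genuinely incompatible with where the corrections land.

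The paper resolves this by introducing a single combined weight
\[
\delta_\Upsilon(z^k\bar z^l\Upsilon^{\mathbf m})=\delta(z^k\bar z^l)+2|\mathbf m|
\]
and inducting on $\delta_\Upsilon$ alone. The point of the factor $2$ is precisely to absorb the loss you overlooked: bracketing with $\varepsilon z\bar z$ (which has $\delta_\Upsilon=1+2=3$) a generator of $\delta_\Upsilon$-degree $S-1$ yields terms of $\delta_\Upsilon$-degree $\ge 3+(S-1)-1=S+1$, hence strictly later in the induction. At each $\delta_\Upsilon$-level $S$ one then runs through the finitely many $\mathbf M$ with $0\le |\mathbf M|\le\lfloor(S-3)/2\rfloor$, applying $\Phi_\chi^{\Upsilon^{\mathbf M}}$ with $\chi$ of $\delta$-degree $S-2|\mathbf M|-1$, and the homological operator is exactly the one from Proposition~\ref{Pro:hn0is0}. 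Once you replace your double induction by this weighted single induction, the rest of your argument goes through unchanged.
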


\begin{proof}
Let
$\delta$-degree of monomial $z^k\bar z^l$
is defined by (\ref{Eq:deltahn0is0}) as before.
Now we introduce a new $\delta_{\Upsilon}$-degree for monomial on five variables:
\begin{equation}\label{Eq:deltaepsilonorder}
\delta_{\Upsilon} (z^k \bar z^l \Upsilon^\mathbf{m})=\delta(z^k \bar z^l)+2|\mathbf{m}| .
\end{equation}
Then
\[ H(z, \bar z; \Upsilon )= \sum_{s \ge 2} h_s(z, \bar z; \Upsilon ), \]
where $h_s$ is a homogeneous polynomial of 
$\delta_{\Upsilon}$-degree $s$. The main term of 
$\delta_{\Upsilon}$-degree $s=2$
is already in the normal form:
\[ h_2(z, \bar z; \Upsilon ) = a (z\bar{z})^{2}+c(z^{2n}+\bar z^{2n}) .\]

We proceed by induction. Let $S \ge 3$ 
and $h_s(z, \bar z; \Upsilon )$ has the declared form
for $s \le S-1$.

The term of $\delta_{\Upsilon}$-degree $S$ has the form:
\begin{equation}
\label{Eq:hS}
 h_S (z, \bar z; \Upsilon )=
\sum_{0 \le |\mathbf{M}|   \le \lfloor \frac{S-1}{2}\rfloor} h_{(S-2|\mathbf{M}|)\mathbf{M}} (z,\bar z)
\Upsilon^\mathbf{M},
\end{equation}
where $h_{(S-2|\mathbf{M}|)\mathbf{M}} (z,\bar z)$
is a polynomial of $\delta$-degree $S-2|\mathbf{M}|$.
The terms of $\delta$-degree 1 and 2 are already
in the normal form. So we consider $S-2|\mathbf{M}|\ge 3$,
i.e. $0\le |\mathbf{M}|\le \lfloor \frac{S-3}{2}\rfloor$.

Let $\chi$  be a homogeneous polynomial on $(z,\bar z)$ 
 of $\delta$-degree $P-1=S-2|\mathbf{M}|-1$. 
After the substitution $(z, \bar z ) \mapsto \Phi_\chi^{\Upsilon^{\mathbf{M}}}(z, \bar z )$
the Hamiltonian takes the form
\[
\widetilde H = H+\Upsilon^{\mathbf{M}} L_\chi H + \sum_{j \ge 2} \frac{\Upsilon^{\mathbf{M}j}}{j!} L_\chi^j H \, .
\]
Then
\[ \tilde h_s (z, \bar z; \Upsilon ) = h_s (z, \bar z; \Upsilon ) \quad \mbox{for \ }s \le S-1 \]
and
\[
\tilde h_S (z, \bar z; \Upsilon ) = h_S (z, \bar z; \Upsilon ) + \Upsilon^{\mathbf{M}}L(\chi) ,
\]
where $L$ is the homological operator (\ref{Eq:homop}) from Section \ref{Se:hn0is0}.
So
\[\tilde h_{(P)\mathbf{m}}=h_{(P)\mathbf{m}} \quad {\rm for\ }
\mathbf{m}\ne \mathbf{M}\]
and
\[\tilde h_{(P)\mathbf{M}}=h_{(P)\mathbf{M}}+L(\chi ).\]

It was been shown in Section \ref{Se:hn0is0} 
that for odd $P$ there exists such $\chi$ that
\[
\tilde h_{(P)\mathbf{M}} =a_{P\mathbf{M}}z^P\bar z^P+
{(z\bar z )}^{P-1}(
b_{P\mathbf{M}} z^{n}+b_{P\mathbf{M}}^* \bar z^{n})
\]
and for even $P$
\[\tilde h_{(P)\mathbf{M}}=a_{P\mathbf{M}}z^P\bar z^P+
c_{P\mathbf{M}} {(z\bar z )}^{P-2}
(z^{2n}+\bar z^{2n}) ,\]
where $a_{P\mathbf{M}},c_{P\mathbf{M}} \in \mathbb{R}$, $b_{P\mathbf{M}} \in \mathbb{C}$.
So, taking sequentially all sets of $(m_1,m_2,m_3)$
such that $0\le m_1+m_2+m_3\le \lfloor \frac{S-3}{2}\rfloor$ we transform all terms of 
$\delta_{\Upsilon}$-degree $S$
to the declared form.
\end{proof}


The following proposition establishes the normal form of Hamiltonian useful for study of bifurcations.
\begin{proposition}\label{Pro:Famhnois0}
Let (\ref{Eq:hdeltanu})
be a formal series where $a\ne 0$ and
$h_{(p)\mathbf{m}}$ are 
real-valued resonant polynomials of $\delta$-degree $p$.
There exists a formal canonical change of variables which
transforms the Hamiltonian $H$
into
\begin{equation}\label{Eq:hfamalt}
\begin{array}{r}
\tilde H(z,\bar z; \Upsilon )=\varepsilon z \bar z +a (z\bar{z})^{2}+(z\bar{z})^{2}\sum_{k+|\mathbf{m}|\ge 1} a_{k\mathbf{m}} {(z\bar{z})}^k \Upsilon^\mathbf{m}   \\
+\nu z^n+ \nu^*\bar z^n + \sum_{k\ge 1,k+m_1\ge 2,
|\mathbf{m}|\ge 0}  (c_{k\mathbf{m}}z^{kn}+c_{k\mathbf{m}}^*\bar z^{kn}) \Upsilon^\mathbf{m}
\end{array}
\end{equation}
where $a_{k\mathbf{m}}\in \mathbb{R}$, $c_{k\mathbf{m}}\in \mathbb{C} $,  $c_{2\mathbf{0}}=c$.
\end{proposition}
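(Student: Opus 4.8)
The plan is to follow the same induction-by-$\delta_{\Upsilon}$-degree strategy as in Proposition~\ref{Pro:Famhn0is0}, but now using the simpler homological operator that appears in the proof of Proposition~\ref{Pro:hn0is0case1} (the individual-map case with only $a\ne 0$ required). Concretely, I would keep the grading $\delta_{\Upsilon}(z^k\bar z^l\Upsilon^{\mathbf m})=\delta(z^k\bar z^l)+2|\mathbf m|$ from~(\ref{Eq:deltaepsilonorder}) and write $H=\sum_{s\ge 1}h_s$ with $h_s$ homogeneous of $\delta_\Upsilon$-degree $s$. The terms of $\delta_\Upsilon$-degree $1$ are $\varepsilon z\bar z+\nu z^n+\nu^*\bar z^n$ (these are not to be touched, since $\varepsilon$ and $\nu$ are the chosen parameters), and the term of $\delta_\Upsilon$-degree $2$ already has the required shape $a(z\bar z)^2+c(z^{2n}+\bar z^{2n})$. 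I proceed by induction: assume $h_s$ has the declared form for all $s\le S-1$ and normalise $h_S$.

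To normalise $h_S$ I decompose it as in~(\ref{Eq:hS}) into pieces $h_{(S-2|\mathbf M|)\mathbf M}(z,\bar z)\Upsilon^{\mathbf M}$ indexed by multi-indices $\mathbf M$, each piece a resonant polynomial of $\delta$-degree $P:=S-2|\mathbf M|$ in $(z,\bar z)$. For each such piece with $P\ge 3$ I apply a sequence of substitutions $(z,\bar z)\mapsto\Phi_\chi^{\Upsilon^{\mathbf M}}(z,\bar z)$ with $\chi=\alpha z^k\bar z^l+\alpha^*z^l\bar z^k$, $k>l$, $k+l=P-1$; by~(\ref{Eq:htransL}) this changes $h_S$ by $\Upsilon^{\mathbf M}L_\chi h_2$ modulo higher $\delta_\Upsilon$-degree, and from the computation in the proof of Proposition~\ref{Pro:hn0is0case1} one has $L_\chi h_2 = 2ia_0\alpha(k-l)z^{k+1}\bar z^{l+1}-2ia_0\alpha^*(k-l)z^{l+1}\bar z^{k+1}$ plus terms of higher $\delta$-degree. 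Since $a_0=a\ne 0$, choosing $\alpha=-h_{k+1,l+1,\mathbf M}/(2ia(k-l))$ kills the coefficient of $z^{k+1}\bar z^{l+1}$ whenever $k>l$. Running this over all admissible $(k,l)$ with $k+l=P-1$, in the order of decreasing degree of the monomial being eliminated so that no already-cleared term is reactivated, removes every $\delta$-degree-$P$ monomial except the diagonal one $z^P\bar z^P$ and (when $P$ is a multiple of $n$, say $P=kn$) the pure harmonics $z^{kn},\bar z^{kn}$, which lie in the kernel of the homological operator. Doing this for every $\mathbf M$ with $P=S-2|\mathbf M|\ge 3$ brings $h_S$ to the form $a_{k\mathbf m}(z\bar z)^{k+2}\Upsilon^{\mathbf m}$ plus $(c_{k\mathbf m}z^{kn}+c_{k\mathbf m}^*\bar z^{kn})\Upsilon^{\mathbf m}$ terms, which is exactly the structure in~(\ref{Eq:hfamalt}). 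That $a_{k\mathbf m}\in\mathbb R$ follows from the reality condition $h_{klm}=h_{lkm}^*$ being preserved by the canonical changes (each $\chi$ is real), and $c_{2\mathbf 0}=c$ because the $\delta_\Upsilon$-degree-$2$ term is untouched.

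The only genuine bookkeeping issue — and the point I would treat most carefully — is the ordering of the eliminations, both within a fixed $\mathbf M$ (to avoid recreating a cleared monomial) and across the pieces: because $\Phi_\chi^{\Upsilon^{\mathbf M}}$ with $\chi$ of $\delta$-degree $P-1$ only affects $\delta_\Upsilon$-degree $\ge S$, different $\mathbf M$ at the same level $S$ do not interfere with one another, and pieces at level $S$ do not disturb levels $<S$; this is what makes the induction close. The one subtlety, already visible in the Lemma for the $n=3$ resonance, is that a transformation generated by $\chi\sim z^n$ can in principle feed back into the coefficient $a=h_{220}$ when $n=3$; here, however, we are generating $\chi$ only at $\delta$-degrees $P-1\ge 2$ and multiplying by $\Upsilon^{\mathbf M}$, so any such feedback lands in strictly higher $\delta_\Upsilon$-degree and is harmless. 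Hence the induction goes through and produces the formal canonical change of variables and the Hamiltonian~(\ref{Eq:hfamalt}); finally, $F_\mu=R_{\alpha_0}\circ\Phi^1_{\widetilde H_\Upsilon}$ by the interpolation theorem applied in the new coordinates.
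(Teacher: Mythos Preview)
Your overall strategy is right, but the choice of grading is not, and this is exactly the point where the argument breaks. You keep the grading~(\ref{Eq:deltaepsilonorder}), i.e.\ $\delta_\Upsilon(z^k\bar z^l\Upsilon^{\mathbf m})=\delta(z^k\bar z^l)+2|\mathbf m|$ with $\delta$ from~(\ref{Eq:deltahn0is0}). With this grading both pieces of $h_2=a(z\bar z)^2+c(z^{2n}+\bar z^{2n})$ have $\delta$-degree $2$, so for $\chi=\alpha z^k\bar z^l+\alpha^* z^l\bar z^k$ of $\delta$-degree $P-1$ the bracket $L_\chi h_2$ contains not only the $a$-contribution $2ia\alpha(k-l)z^{k+1}\bar z^{l+1}+\text{c.c.}$ but also the $c$-contribution (terms like $z^{k+2n-1}\bar z^{l-1}$), and a direct check with~(\ref{Eq:deltahn0is0}) shows the latter has the \emph{same} $\delta$-degree $P$, not higher. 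Hence your sentence ``plus terms of higher $\delta$-degree'' is false: the leading homological operator you actually get is the full operator~(\ref{Eq:homop}) of Proposition~\ref{Pro:hn0is0}, whose complement produces the normal form~(\ref{Eq:hfinalC}), not~(\ref{Eq:hfamalt}). (A related slip: with the grading~(\ref{Eq:deltaepsilonorder}) the terms $\varepsilon z\bar z$ and $\nu z^n+\nu^*\bar z^n$ have $\delta_\Upsilon$-degree $3$, not $1$; and ``$k+l=P-1$'' should read ``$\delta(z^k\bar z^l)=P-1$''.)

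The paper's proof avoids this by changing the grading: it sets $\delta_\Upsilon(z^k\bar z^l\Upsilon^{\mathbf m})=k+l+2|\mathbf m|$, the ordinary total degree in $(z,\bar z)$ plus $2|\mathbf m|$. Now $a(z\bar z)^2$ has degree $4$ while $c(z^{2n}+\bar z^{2n})$ has degree $2n\ge 6$, so at each step the leading homological operator is just $\chi\mapsto L_\chi\bigl(a(z\bar z)^2\bigr)$, exactly the operator used in Proposition~\ref{Pro:hn0is0case1}; its image misses only the diagonal monomials $z^p\bar z^p$ and the pure harmonics $z^{jn},\bar z^{jn}$, which is precisely the shape~(\ref{Eq:hfamalt}). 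Once you switch to this grading, the rest of your induction (separating $\mathbf M$'s, noting that changes at level $S$ do not disturb lower levels, reality of $a_{k\mathbf m}$, $c_{2\mathbf 0}=c$) goes through as you wrote it.
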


\begin{proof}
We use $\delta_{\Upsilon}$-degree introduced by
\[ \delta_{\Upsilon}(z^k\bar z^l\Upsilon^{\mathbf{m}})=k+l+2|{\mathbf{m}}|\]
 and normalized order by order as in Proposition~\ref{Pro:Famhn0is0} but
now we choose the compliment to the image of the homological operator as in the proof of Proposition~\ref{Pro:hn0is0case1}.
\end{proof}


\section{Bifurcations for the case of degeneracy in the leading resonant term}\label{Se:biflead}

We discuss typical level sets of Hamiltonian (\ref{Eq:hfamalt})
for $0<|\varepsilon |+|\nu |\le \epsilon_0$ with sufficiently small $\epsilon_0$. 
In order to investigate biffurcations we keep lower order terms 
and skip those, which do not change the picture qualitatively:
\[ H(z, \bar z ; \varepsilon , \tilde \nu )=\varepsilon z\bar z+a (z\bar{z})^{2}
+\tilde \nu z^n+\tilde \nu^* \bar z^n
+c(z^{2n}+\bar z^{2n}),\]
where $\tilde \nu =\nu+\varepsilon c_{1100}+\dots +\varepsilon^kc_{1k00}$, 
$k=\left\lfloor \frac{n}{2}\right\rfloor$.

Applying the symplectic polar coordinates 
(\ref{Eq:zpolar}) and assuming $\tilde \nu =\frac{1}{2}
\gamma e^{i\beta}$ we get
the model Hamiltonian in the form
\begin{equation}\label{Eq:modelp_B}
H(I, \varphi; \varepsilon, \gamma , \beta )
=\varepsilon I+ I^2+\gamma I^{n/2}\cos (n\varphi+\beta)+I^n\cos 2n\varphi\,.
\end{equation}
Here the coefficients $a$ and $c$ are normalized to unity 
with the help of a scaling applied to the variable $I$,
the parameters $\varepsilon$ and $\gamma$, and 
the Hamiltonian function $H$.

The case of $n=3$ is different
from the case of $n\ge 4$.

\subsection{Typical level sets of the model
Hamiltonian for $n\ge 4$}

Since 
\begin{equation}\label{Eq:prIH}
\partial_I H = \varepsilon +2I+O(\gamma I^{n/2-1})
+O(I^{n-1}),
\end{equation}
when $\varepsilon >0$  for $n \ge 4$
Hamiltonian $H$ 
has not any critical points near the origin and
 level sets are closed curves.

Typical level sets for $\varepsilon <0$
are depending on value of parameters $\varepsilon $ and $\gamma$. We consider separatly the case of 
$\gamma \gg |\varepsilon |^{n/2}$
and $\gamma = O(|\varepsilon |^{n/2})$.

If ${|\varepsilon |}^{n/2}=o(\gamma )$
then the last term in (\ref{Eq:modelp_B})
can be omitted. 
Then critical points of Hamiltonian 
are located near $I=-\frac{\varepsilon}{2}$,
$\cos (n\varphi+\beta)=\pm 1$. 
 There is a chain of $n$ islands
on the distance of order $|\varepsilon |^{1/2}$ from the origin.
The size of islands is of order 
$\gamma^{1/2} |\varepsilon |^{n/4}$.

 For
 $\gamma = O\left({|\varepsilon | }^{n/2}\right)$ the last term in (\ref{Eq:modelp_B})
 is also essential.
In order to study bifurcations we introduce the
scaling:
\begin{equation}\label{Eq:scaling4p_B}
\gamma=4{\left( -\frac{\varepsilon}{2}\right) }^{n/2} b,\quad
I=-\frac{\varepsilon}{2}+{\left( -\frac{\varepsilon}{2}\right) }^{n/2}J,\quad \psi =n\varphi ,\quad 
H={\left( -\frac{\varepsilon}{2}\right) }^n\bar H\,.
\end{equation}


After this scaling the Hamiltonian takes the form (scipping the constant term)
$\bar H(J,\psi)=\bar H_0(J,\psi)+
O\left( {\left( -\frac{\varepsilon}{2}\right) }^{n/2-1}
\right) $, where
\begin{equation}\label{Eq:barh0_4p_B}
\bar H_0(J,\psi)=J^2+4b\cos (\psi+\beta)+\cos 2\psi\,.
\end{equation}
Equilibrium points of $\bar H_0(J,\psi)$ locate at the points for which $J=0$ and
\begin{equation}\label{Eq:hyperb}
b\sin (\psi+\beta)+\sin \psi \cos \psi =0.
\end{equation}
To figure out how many solutions has this equation we denote $\sin \psi =x$ and $\cos \psi =y$. Then equation (\ref{Eq:hyperb})
is equvalent to the system
\[
\left\lbrace {
\begin{array}{c} xb\cos \beta  +yb \sin \beta  +xy =0 \\ x^2 +y^2=1
\end{array}
} \right. 
\] 
First equation corresponds to hyperbole (or 2 straight lines if $\cos \beta=0$, or $\sin \beta =0$, or $b=0$). One branch of the hyperbole (or one of stright lines) passes through the origin.
So at least two solutions exist for arbitrary 
$b$ and $\beta$.

\begin{figure}[t]
\begin{center}
\includegraphics[width=5cm]{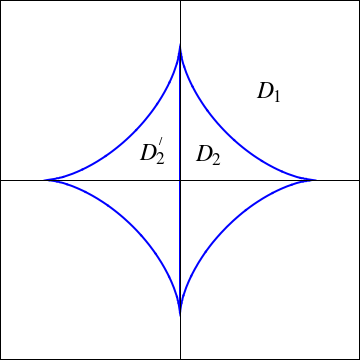}
\end{center}
\caption{
Bifurcation diagram on the complex plane of 
$\nu=\gamma e^{i\beta }$ for 
$\varepsilon <0$. The domains $D_2$ and $D_2'$ are 
separated by the vertical line segment.
\label{Fig:nbbifdiag}}
\end{figure}

There are two more solutions if second branch of hyperbole (or second straight line) crosses 
the unit circle. Tangency condition for the unit circle and the second brunch of 
 the hyperbole  is
\[ {(b\cos \beta)}^{2/3}+{(b \sin \beta)}^{2/3}=1. \]
On the complex plane of $\gamma e^{i\beta}
=\nu =\nu_1+i\nu_2$ corresponding line is given by
astroid:
\begin{equation}\label{Eq:astr}
{\nu_1}^{2/3}+{\nu_2}^{2/3}=2^{4/3}{\left( -\frac{\varepsilon}{2}\right) }^{n/3} \, 
\end{equation}
and is presented on Fig.~\ref{Fig:nbbifdiag}. 
Typical critical level sets of Hamiltonian 
corresponding different values of parameter $\nu$ 
(provided $\varepsilon <0$) are shown on Fig.~\ref{Fig:h0ll}.
Fragments on the figure are repeated $n$ times.
On the boundary between $D_2$ and $D_2'$ 
(vertical line segment)
there is a chain of $2n$ islands (Fig.~\ref{Fig:h0ll}(c)).
Outside a domain bounded by the astroid there is a chain of $n$ islands.

\begin{figure}[t]
\begin{center}
\includegraphics[width=2.9cm]{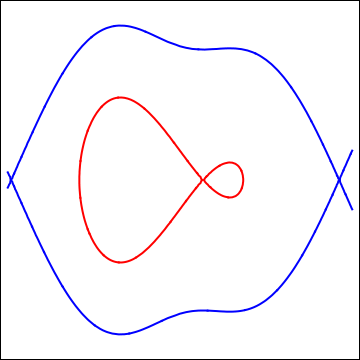}\quad
\includegraphics[width=2.9cm]{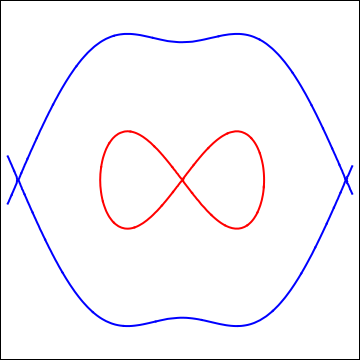}\quad
\includegraphics[width=2.9cm]{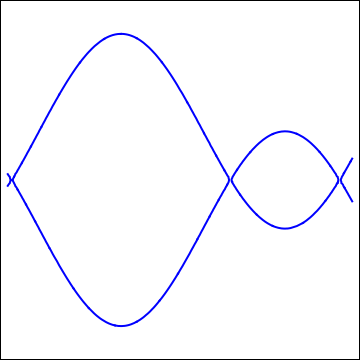}\quad
\includegraphics[width=2.9cm]{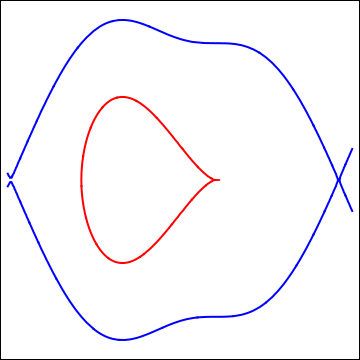}\\
(a)\kern2.9cm(b)\kern2.9cm(c)\kern2.9cm(d)
\end{center}
\caption{Critical level sets of the 
model Hamiltonian for $\varepsilon <0$ depending on value of $\nu$ indicated on bifurcation diagram on Fig.~\ref{Fig:nbbifdiag}: (a) $\nu \in D_2$, (b) $\nu \in D_2$, $ \beta =0$, (c) $\nu$ is in the boundary between $D_2$ and $D_2'$ ($\beta =\pm \pi /2$), (d) $\nu$ is in the boundary between $D_2$ and $D_1$.
\label{Fig:h0ll}}
\end{figure}

\subsection{Typical level sets of model Hamiltonian
for $n=3$}

For $\varepsilon <0$ the case of $n=3$ essentially differs
from the case of $n\ge 4$ as the third term in (\ref{Eq:prIH})
cannot be omitted.

The first three terms in (\ref{Eq:prIH})
are of the same order if $I$ and $\varepsilon $
are of the order of $\gamma^2$. So we 
use the following scaling in (\ref{Eq:modelp_B})
\begin{equation}\label{Eq:scaling3b1} 
\varepsilon =a \gamma^2, \quad I=\gamma^2 J, \quad 
H=\gamma^4 \bar H \,  
\end{equation}
and get $\bar H = \bar H_0 +O(\gamma^2)$, where
\begin{equation}\label{Eq:n3sc1} \bar H_0 =a J+J^{3/2}\cos (3\varphi +\beta ) +J^2. 
\end{equation}

\begin{figure}[t]
\begin{center}
\includegraphics[width=5cm]{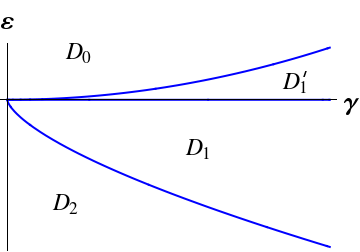}
\end{center}
\caption{
Bifurcation diagram on the 
plane $(\gamma ,\varepsilon )$ for 
$n=3$.
\label{Fig:n3bifdiag}}
\end{figure}

\begin{figure}[t]
\begin{center}
\includegraphics[width=3cm]{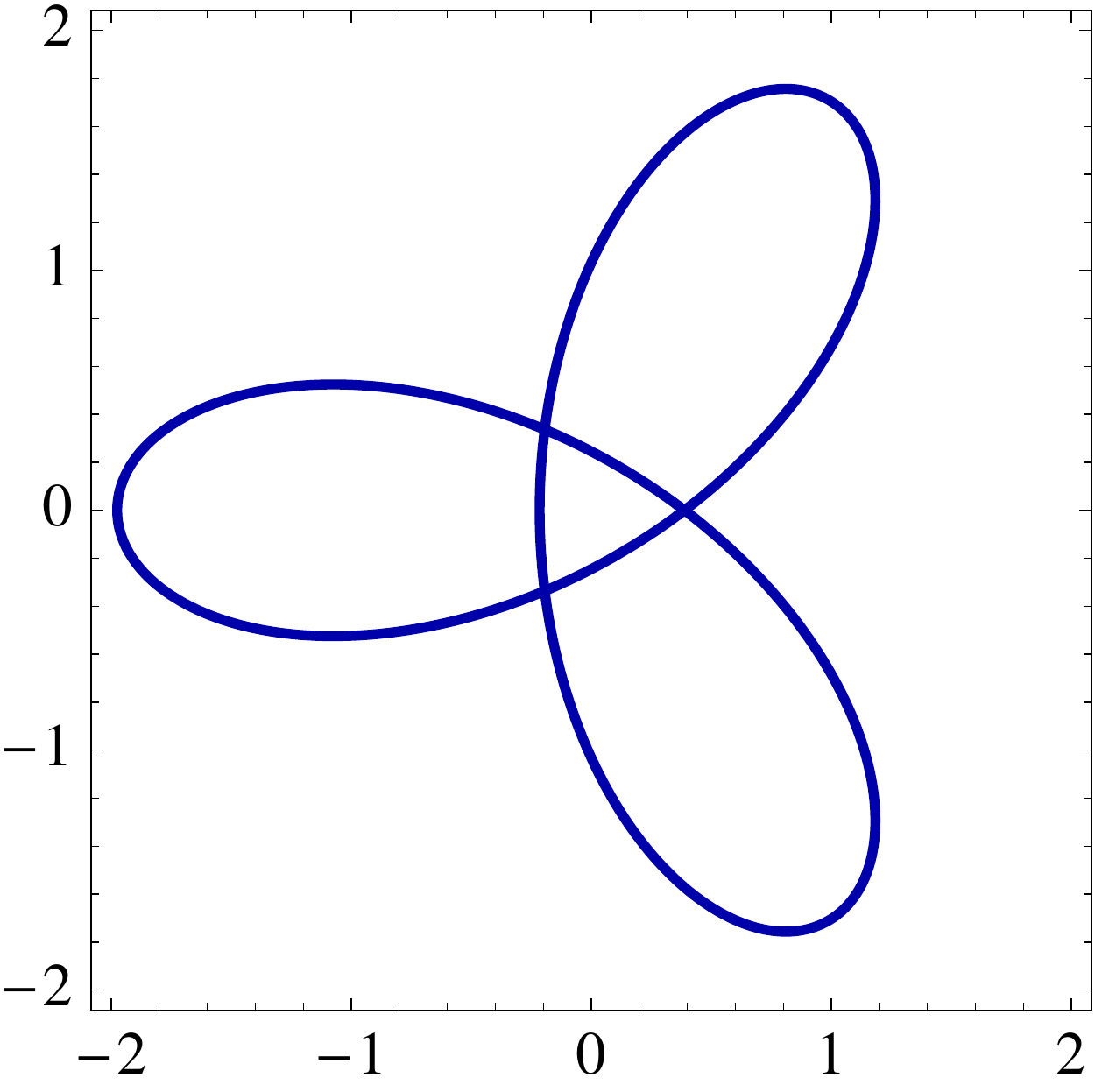}
\kern0.2cm
\includegraphics[width=3cm]{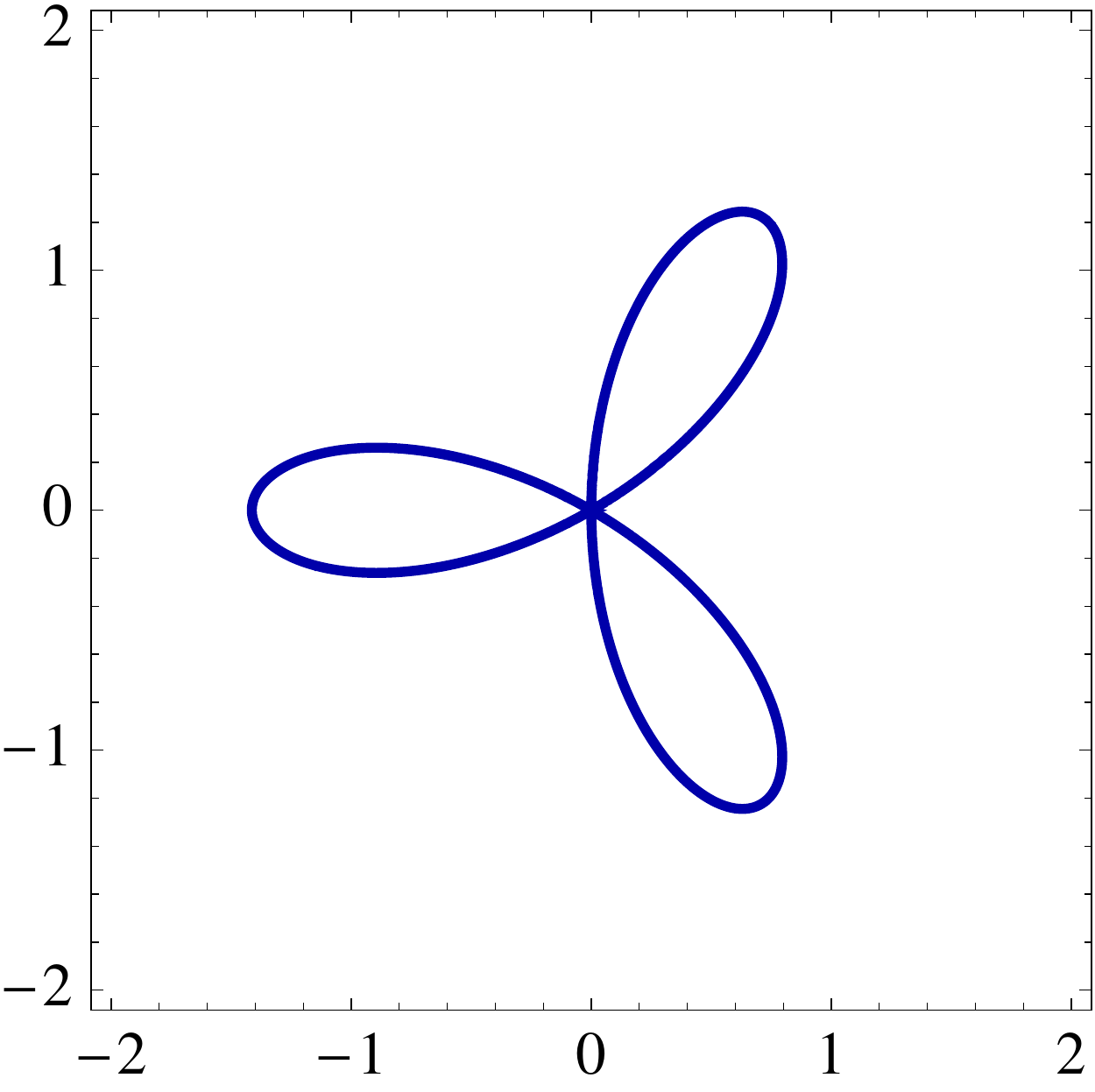}
\kern0.2cm
\includegraphics[width=3cm]{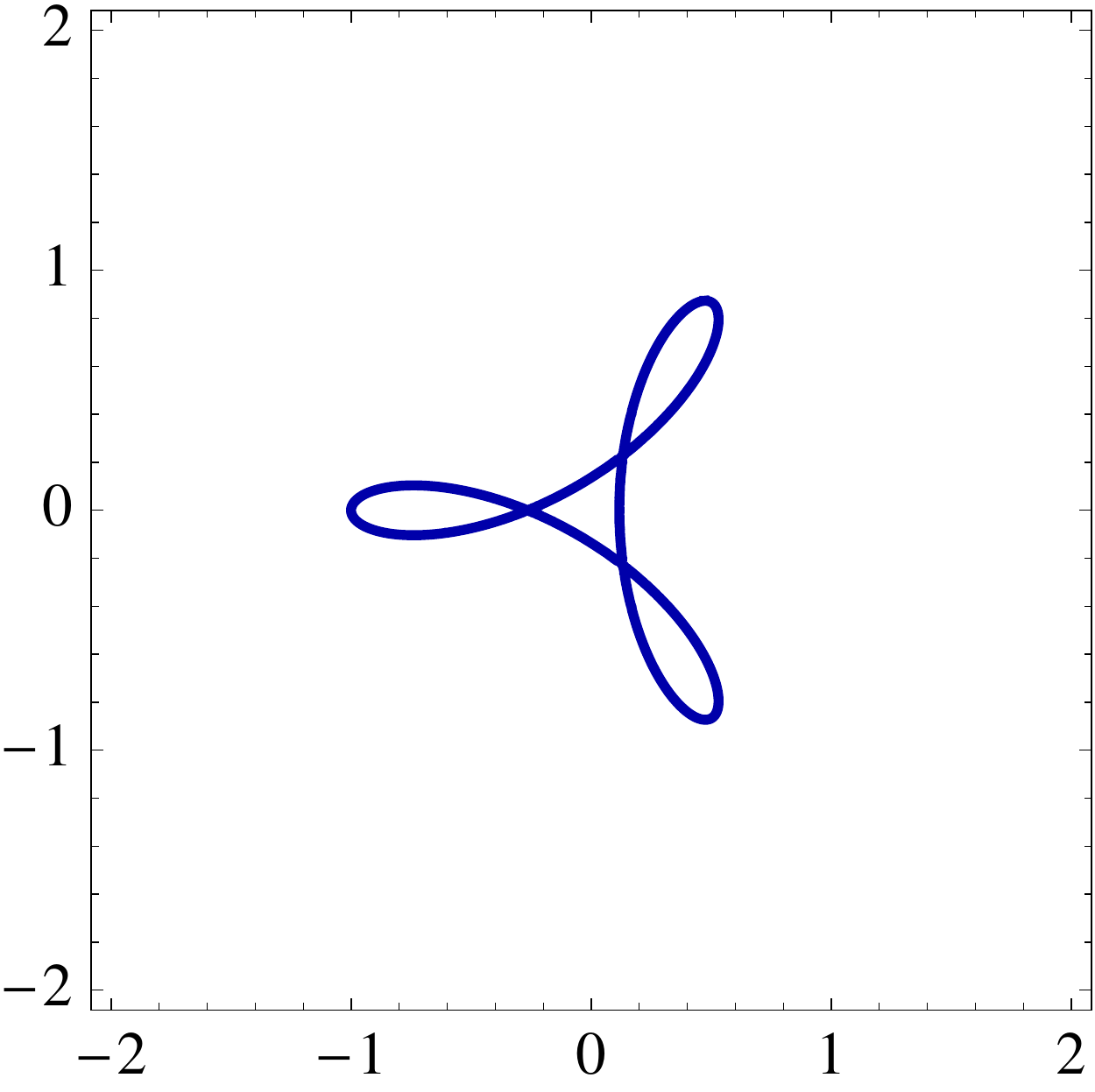}
\kern0.2cm
\includegraphics[width=3cm]{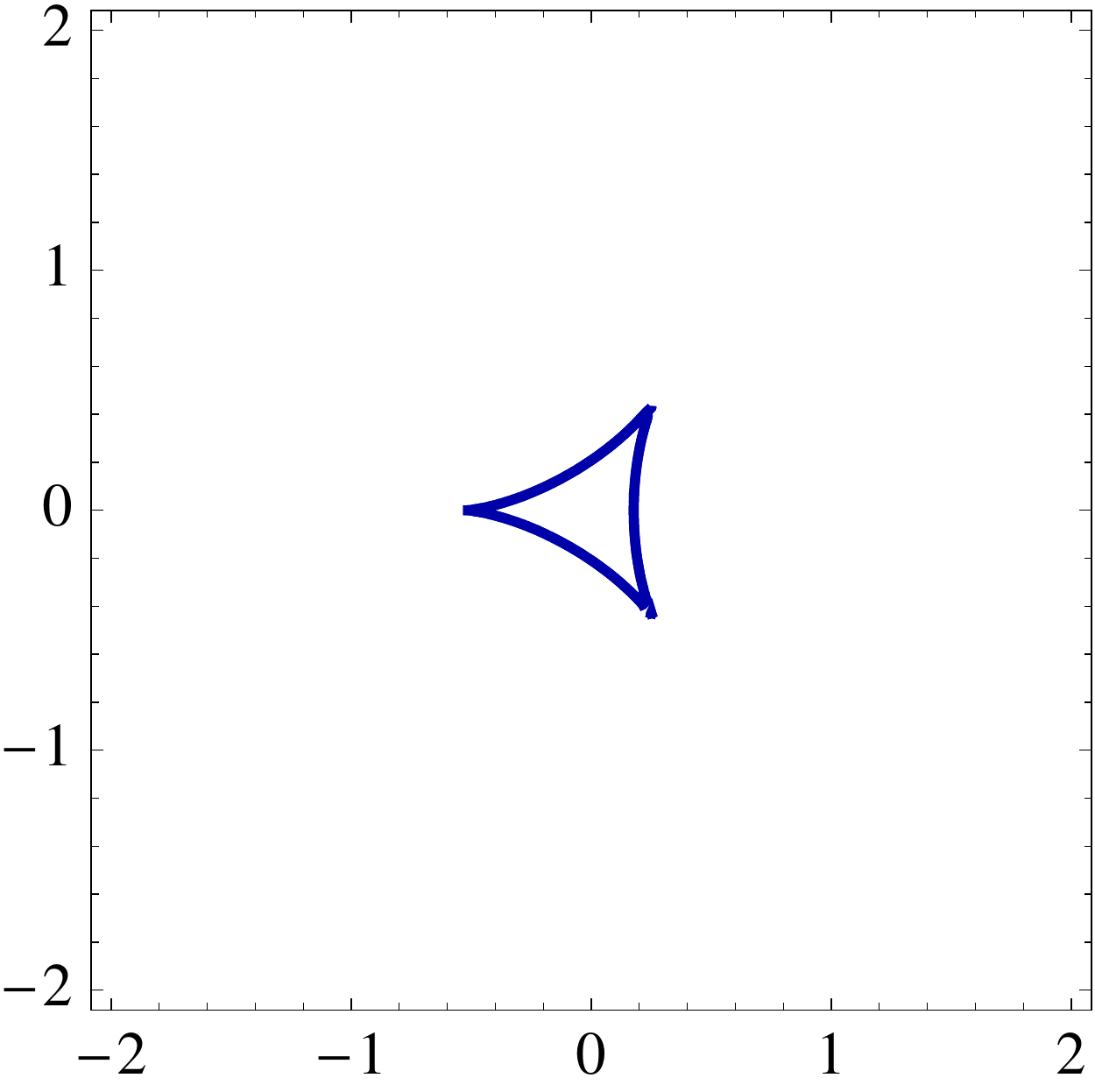}
\\
(a)\kern2.8cm%
(b)\kern2.8cm%
(c)\kern2.8cm%
(d)%
\end{center}
\caption{Critical level sets of the model Hamiltonian depending on value of $(\gamma , \varepsilon )$:
(a)~$(\gamma , \varepsilon )\in D_1$, 
(b)~the boudary between $D_1$ and $D_1'$ ($\varepsilon =0$), 
(c)~$(\gamma , \varepsilon )\in D_1'$, 
(d)~the boudary between $D_1'$ and $D_0$
($\varepsilon =\frac{9}{32} \gamma^2$).
\label{Fig:n3b2}
}
\end{figure}

The critical points of $\bar H_0$ are located
at the points for which $\cos(3\varphi +\beta)=\sigma_\varphi=\pm 1$,
and
$$
a +\frac{3\sigma_\varphi}2J^{1/2}+2J=0\,.
$$
The last equation can be solved explicitly.  
There are no real solutions for $a>a_0$, where $a_0=\frac{9}{32}$. If $a\in (0;a_0)$ there are two
solutions for $\sigma_\varphi =-1$ and no one for
$\sigma_\varphi =1$. And if $a<0$ there is one solution 
for $\sigma_\varphi =-1$ and one for
$\sigma_\varphi =1$.

The line
$$
\varepsilon =\frac{9}{32}\gamma^2\,.
$$
on the plane $(\gamma, \varepsilon )$
bounded the domain in which level sets of
the Hamiltonian (\ref{Eq:n3sc1}) are closed curves
(the domain $D_0$ on Fig.~\ref{Fig:n3bifdiag}).
 
The critical level sets of the Hamiltonian $\bar H_0$ are illustrated on Figure~\ref{Fig:n3b2}. 

For $\varepsilon <0$ the case of $n=3$ is not different from $n\ge 4$ (see the analysis in the previous subsection). In particular, the boundary between $D_1$ and $D_2$ is given by (\ref{Eq:astr}). 
On the Fig.~\ref{Fig:n3bifdiag}  we assume 
$\nu_1=\nu =\gamma $, $\nu_2=0$ which correspond to
$\beta =0$ ($\nu =\gamma e^{i\beta }$). Then the boundary between $D_1$ and $D_2$ is given by 
\[ \varepsilon = -2^{-1/3} \gamma^{2/3}. \]

For $(\gamma , \varepsilon ) \in D_2$ level sets of the model Hamiltonian are presented on Fig.~\ref{Fig:h0ll}.


\section{Families with twist degeneracy\label{Se:Familiesh33}}

The theorem about normal forms for families 
with twist degeneracy can be found in \cite{GG2014}. For three-parametric families it takes the following form.

\begin{proposition} \label{Pro:h33issmall}
Let 
$\Upsilon =(\varepsilon_1,\varepsilon_2,\varepsilon_3 )$,
$\mathbf{m} = (m_1,m_2,m_3 )$,
$\Upsilon^\mathbf{m} = \varepsilon_1^{m_1}
\varepsilon_2^{m_2}\varepsilon_3^{m_3}$.
If  
\begin{equation}\label{Eq:h_originalh33small}
H(z,\bar z; \Upsilon)=\varepsilon_1 z\bar z+\varepsilon_2 {(z\bar z)}^2+\varepsilon_3 {(z\bar z)}^3+\sum_{\substack {k+l > 2 ,\ k,l,|\mathbf{m}|\ge 0, \\k=l \pmod n}}  h_{kl\mathbf{m}}z^k \bar z^l\Upsilon^\mathbf{m} 
\end{equation}
is a formal series such that $h_{kl\mathbf{m}}=h_{lk\mathbf{m}}^*$, $h_{22\mathbf{m}}=h_{33\mathbf{m}}=0$  and $h_{44\mathbf{0}}h_{n0\mathbf{0}}\ne 0$,
then there exists a formal tangent-to-identity canonical change of variables which
transforms the Hamiltonian $H$
into
\begin{eqnarray*}\label{Eq:hn0is0}
\widetilde H(z,\bar z;\Upsilon)=\varepsilon_1 z\bar z+\varepsilon_2 {(z\bar z)}^2+\varepsilon_3 {(z\bar z)}^3+
(z\bar{z})^{4}\sum_{k, |\mathbf{m}|\ge 0} a_{k\mathbf{m}} {(z\bar{z})}^k \Upsilon^\mathbf{m} \\
+(z^n+\bar z^n)\sum_{k, |\mathbf{m}|\ge 0} b_{k\mathbf{m}} {(z\bar{z})}^k \Upsilon^\mathbf{m},
\end{eqnarray*}%
where $a_{k\mathbf{m}},b_{k\mathbf{m}} \in \mathbb{R}$,  $b_{0\mathbf{0}}=|h_{n0\mathbf{0}}|$. Moreother
\begin{itemize}
\item
if $3 \le n \le 8$

$a_{k\mathbf{m}}=0$ for $k=n-5 \pmod n$ and $b_{k\mathbf{m}}=0$ for $k=n-1 \pmod n$ 

\item
if $n\ge 8$

$b_{k\mathbf{m}}=0$ for $k=3 \pmod 4$

\end{itemize}
The coefficients $a_{k\mathbf{m}}$ and $b_{k\mathbf{m}}$ are defined uniquely.
\end{proposition}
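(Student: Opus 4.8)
The plan is to run the same graded-normalization scheme used in Propositions~\ref{Pro:hn0is0}--\ref{Pro:Famhnois0}, but now with a $\delta$-degree adapted to the twist-degenerate situation. Since $h_{22\mathbf{m}}=h_{33\mathbf{m}}=0$, the lowest-order terms of interest are $z^4\bar z^4$ (with nonzero coefficient $h_{44\mathbf 0}$) and $z^n,\bar z^n$ (with nonzero coefficient $h_{n0\mathbf 0}$), together with the three given parameter-linear terms $\varepsilon_1 z\bar z$, $\varepsilon_2 (z\bar z)^2$, $\varepsilon_3 (z\bar z)^3$. The natural weighting is the one that makes $z^4\bar z^4$ and $z^n$ share the same $\delta$-degree: set $\delta(z^k\bar z^l)=\min\{k,l\}+\tfrac{4}{n}|k-l|$ for a resonant monomial (so $\delta(z^n)=4=\delta(z^4\bar z^4)$), and extend to the parameters by $\delta(\Upsilon^\mathbf{m})$ chosen so that $\varepsilon_1 z\bar z$, $\varepsilon_2(z\bar z)^2$, $\varepsilon_3(z\bar z)^3$ all land at $\delta$-degree $4$ as well — i.e. assign $\delta(\varepsilon_1)=2$, $\delta(\varepsilon_2)=0$ reinterpreted appropriately, or more cleanly treat the $\varepsilon_i$-linear terms as the fixed ``principal part'' and weight the remaining $\Upsilon^\mathbf{m}$-monomials uniformly. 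One then checks the key sub-additivity estimate for Poisson brackets, $\delta(\{A,B\})\ge \delta(A)+\delta(B)-1$, exactly as in~\eqref{Eq:Pbdelta}; this is where the choice of weight $\tfrac4n$ is forced, and it holds for all $n\ge 3$.

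With the grading fixed, I would proceed by induction on $\delta$-degree $p$. At each level the homological operator is $L(\chi)=[\{h_{\mathrm{princ}},\chi\}]_p$ where $h_{\mathrm{princ}}=h_{44\mathbf 0}(z\bar z)^4+(b_{0\mathbf 0}z^n+b_{0\mathbf 0}^*\bar z^n)+\varepsilon_1 z\bar z+\cdots$. The crucial computation is the action of $L$ on the basis monomials $Q_{p,j}$ (in the notation of Proposition~\ref{Pro:hn0is0}, adapted to the new weight): brackets with $(z\bar z)^4$ shift the ``angular index'' $j$ by $0$ and multiply by a factor proportional to $j$, while brackets with $z^n$ (resp. $\bar z^n$) shift $j$ up (resp. down) by one unit and multiply by a factor proportional to $(\text{radial index})$. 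The kernel of $L$ on each homogeneous piece is what determines the surviving terms. Because $h_{44\mathbf 0}\ne0$ and $b_{0\mathbf 0}\ne0$, one can eliminate all monomials except: the pure twist tower $(z\bar z)^{k+4}$ with real coefficients, the resonant tower $(z^n+\bar z^n)(z\bar z)^k$ with real coefficients, and — at the special degrees where $h_{\mathrm{princ}}^{\,t}$ is itself in the kernel — nothing extra beyond those. The additional vanishing statements ($a_{k\mathbf{m}}=0$ for $k\equiv n-5\ (\mathrm{mod}\ n)$ and $b_{k\mathbf{m}}=0$ for $k\equiv n-1\ (\mathrm{mod}\ n)$ when $3\le n\le 8$; $b_{k\mathbf{m}}=0$ for $k\equiv 3\ (\mathrm{mod}\ 4)$ when $n\ge 8$) come from a dimension count: at those degrees the image of $L$ has codimension strictly less than the naive count of ``nonresonant-looking'' slots, so after using up the obvious normalizations there remains one more direction to kill, and a secondary change of variables (generated by a higher-order $\chi$) removes precisely the indicated coefficient. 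This is the analogue of the rotation-by-$\pi/n$ refinement in Proposition~\ref{Pro:hn0is0}, but here it also pins down reality and the extra index congruences, and it is what forces uniqueness.

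The main obstacle I anticipate is the bookkeeping in the homological step at the threshold degrees, i.e. identifying the exact codimension of $\mathrm{im}\,L$ and showing that after the standard normalization the residual term can always be absorbed. One has to track two competing ``index flows'' — the $j\mapsto j$ flow from $(z\bar z)^4$ and the $j\mapsto j\pm1$ flow from $z^{\pm n}$ — and argue that their combined range misses exactly a one-dimensional space at degrees $k\equiv n-5$ or $k\equiv n-1$ (small $n$) and $k\equiv 3\ (\mathrm{mod}\ 4)$ (large $n$). The split at $n=8$ is the signature of where the $z^n$-flow stops being able to reach the relevant slot within the available range of radial indices, so the twist flow alone (period $4$ in the relevant index, because $(z\bar z)^4$ has ``radial length'' $4$) takes over and produces the mod-$4$ condition. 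I would handle this by writing $L$ as a block-bidiagonal matrix in the $(j,\text{radial})$ lattice, computing its rank directly, and reading off both the surviving coefficients and their uniqueness; the parameter directions $\Upsilon^\mathbf{m}$ ride along passively since they commute with everything and only multiply the whole homological equation by $\Upsilon^\mathbf{M}$, exactly as in the proof of Proposition~\ref{Pro:Famhn0is0}.
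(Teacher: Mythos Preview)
The paper does not supply its own proof of this proposition: it simply quotes the result from \cite{GG2014} specialised to three parameters. So there is no in-paper argument to compare against beyond that citation, and your outline is in spirit the right one --- graded normalisation with a homological operator built from the leading part, parameters carried passively as in Proposition~\ref{Pro:Famhn0is0}.

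That said, there is a concrete error in your grading. You set
\[
\delta(z^k\bar z^l)=\min\{k,l\}+\tfrac{4}{n}|k-l|
=\tfrac12(k+l)-\tfrac{n-8}{2n}\,|k-l|,
\]
and then claim the Poisson-bracket estimate
$\delta(\{A,B\})\ge\delta(A)+\delta(B)-1$
``holds for all $n\ge3$''. It does not. Repeating the computation in~\eqref{Eq:Pbdelta} with the coefficient $\tfrac{n-8}{2n}$ in place of $\tfrac{n-2}{2n}$, the inequality reduces to
\[
-\tfrac{n-8}{2n}\,|k_1+k_2-l_1-l_2|\ \ge\ -\tfrac{n-8}{2n}\bigl(|k_1-l_1|+|k_2-l_2|\bigr),
\]
which for $n\ge8$ follows from the triangle inequality, but for $3\le n\le7$ demands the \emph{reverse} triangle inequality and is false (take $k_1-l_1=n$, $k_2-l_2=-n$). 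So for small $n$ your induction does not even get started: a bracket of two $\delta$-homogeneous polynomials can drop below the expected degree, and the homological step no longer isolates a single level.

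This is exactly why the statement splits into two regimes. For $n\ge8$ your weight is the right one: the principal part is $a(z\bar z)^4+b(z^n+\bar z^n)$, the $(z\bar z)^4$-flow has period~$4$ in the radial index, and one gets the $k\equiv3\pmod4$ vanishing. For $3\le n\le8$ a different grading is needed, one in which $z^n$ (not $(z\bar z)^4$) drives the homological operator; the resulting periodicity is~$n$, which produces the $k\equiv n-5$ and $k\equiv n-1\pmod n$ conditions. Your proposal conflates the two cases into a single scheme, and the place where it breaks is precisely the sub-additivity check you asserted without verifying. Fixing this means running two separate inductions with two different $\delta$-degrees, as in \cite{GG2014}.
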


In the symplectic polar coordinates
(\ref{Eq:zpolar}) the model Hamiltonian is
\begin{equation} \label{Eq:modelHtwist} H(I,\varphi ;\Upsilon)=\varepsilon_1I +\varepsilon_2 I^2+\varepsilon_3I^3+I^4+b_0I^{n/2} \cos n\varphi \, .
\end{equation}
Its typical level sets depend on which of two last terms
is the main. If the term $I^4$ is smaller then the last 
term ($n \le 7$ or $n=8$ and $|b_0|>1$) then for
$\Upsilon =0$ the origin is not stable. 
If $n \ge 9$ or $n=8$ and 
$|b_0|<1$ then
the origin is stable for $\Upsilon =0$. 
Bifurcations in stable and unstable cases are briefly
considered below.

\subsection{Stable case }

Let $n\ge 9$. For $0\le I\ll 1$, 
the Hamiltonian (\ref{Eq:modelHtwist}) can be considered as a small perturbation
of $H_0=\varepsilon_1I +\varepsilon_2 I^2+\varepsilon_3I^3+I^4$. The level sets of $H_0$ are circles for all values of the
parameters. The Hamiltonian (\ref{Eq:modelHtwist}) does not posses  this symmetry. 
The implicit function theorem implies that
the last term can affect on the shape of level sets
of $H$
only near zeroes of $\partial_I 
H_0$. The equation 
\begin{equation}\label{Eq:bdroots}
\partial_IH_0=\varepsilon_1 +2\varepsilon_2 I+3\varepsilon_3I^2+4I^3 =0
\end{equation} 
has from $0$ to $3$ solutions depending on $(\varepsilon_1,\varepsilon_2,\varepsilon_3 )$.

\begin{figure}[t]
\begin{center}
\includegraphics[width=3.8cm]{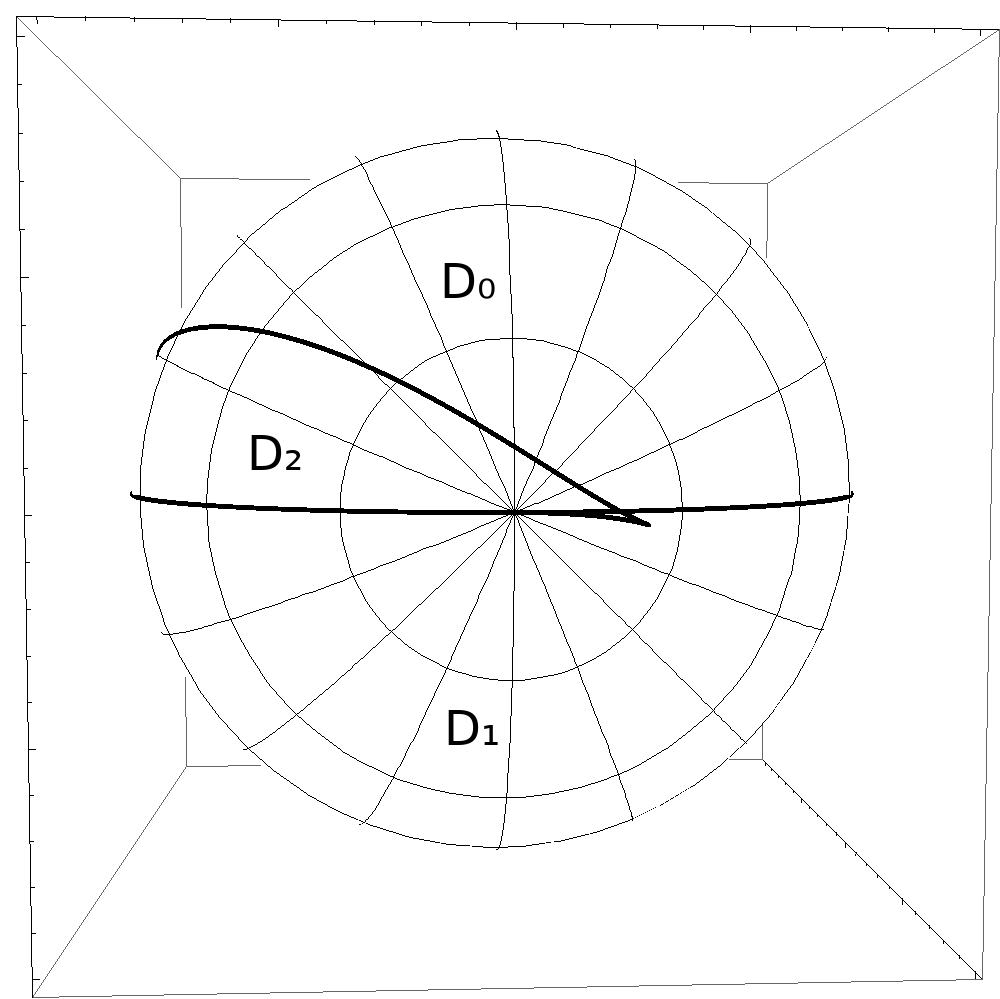}\qquad
\includegraphics[width=3.8cm]{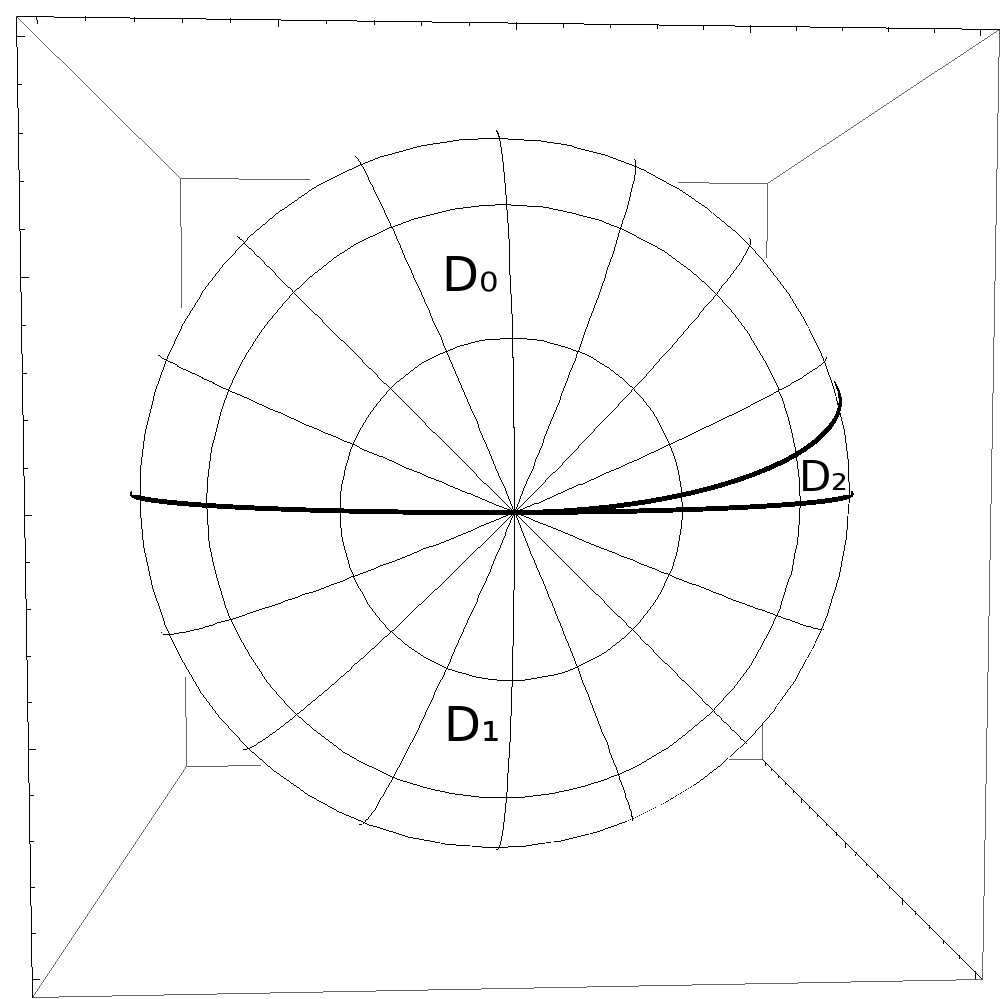}\\
(a)\kern3.8cm(b)
\end{center}
\caption{Bifurcation diagram for stable case on the
unit sphere in the space of parameters $(\varepsilon_1,
\varepsilon_2,\varepsilon_3)$. 
\label{Fig:bdtwist}}
\end{figure}

Let $D_k$ be a domain in the parameter space such that
 equation (\ref{Eq:bdroots}) has $k$ roots, {\it i.e.}
 $H_0$ has $2kn$ stationary points.
These domains on 
the unit sphere in the space 
$(\varepsilon_1, \varepsilon_2,\varepsilon_3)$ are shown
on the Figure~\ref{Fig:bdtwist}.

In $D_0$ all level sets of $H$ 
are closed curves around the origin. In $D_1$ there is 
one chain of islands. In $D_2$ typical level sets are 
similar to ones in two-parametric families \cite{GG2014}.

\begin{figure}[t]
\begin{center}
\includegraphics[width=2.9cm]{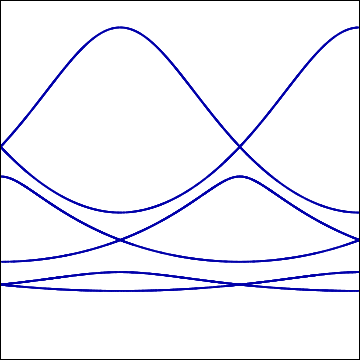}\quad
\includegraphics[width=2.9cm]{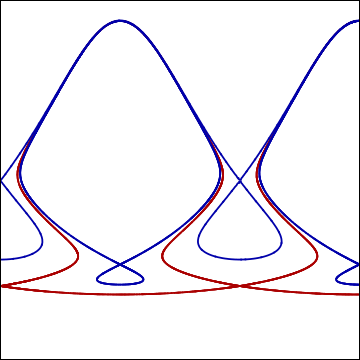}\quad
\includegraphics[width=2.9cm]{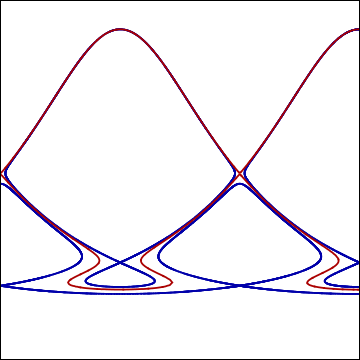}\quad
\includegraphics[width=2.9cm]{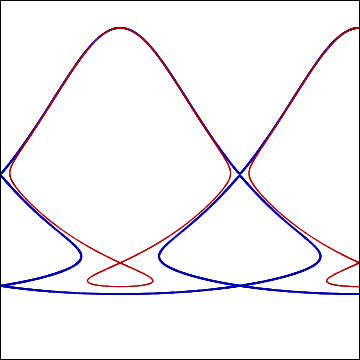}\\
(a)\kern2.9cm(b)\kern2.9cm(c)\kern2.9cm(d)
\end{center}
\caption{The typical critical level sets for the Hamiltonian (\ref{Eq:modelHtwist}) in the stable case if parameters $(\varepsilon_1,\varepsilon_2,\varepsilon_3) \in D_3$. 
\label{Fig:Stable}}
\end{figure}

In neighbourhood of the point $\varepsilon_1=0$, 
$\varepsilon_2=0$, $\varepsilon_3=-1$ there is a tiny domain $D_3$. When parameters are in $D_3$
the equation (\ref{Eq:bdroots}) have 3 roots. 
They correspond to three sets of hyperbolic 
and three sets of elliptic stationary points
of the model Hamiltonian (\ref{Eq:modelHtwist}). Some of the possible corresponding critical level sets are shown on Figure~\ref{Fig:Stable}. The fragment shown on the figure is repeated $n$ times around the origin.

\begin{figure}[t]
\begin{center}
\includegraphics[width=3.8cm]{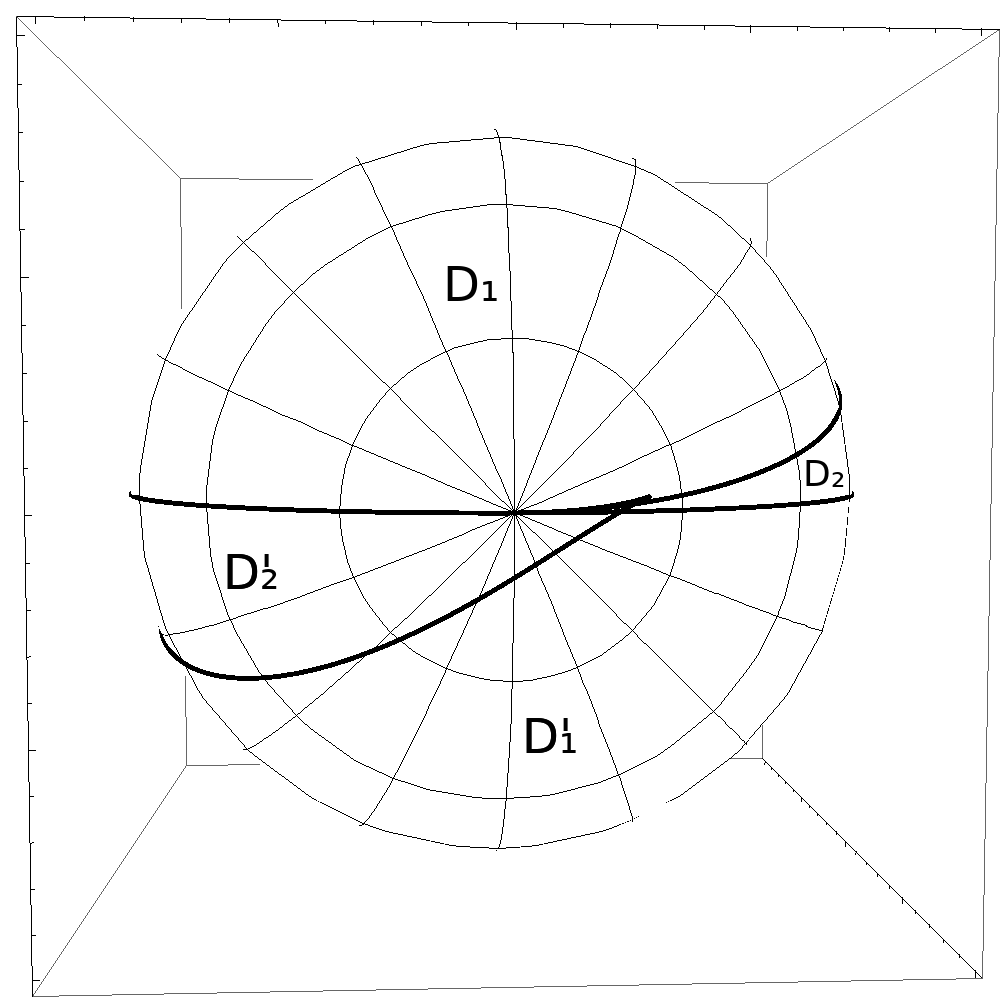}\quad
\includegraphics[width=3.8cm]{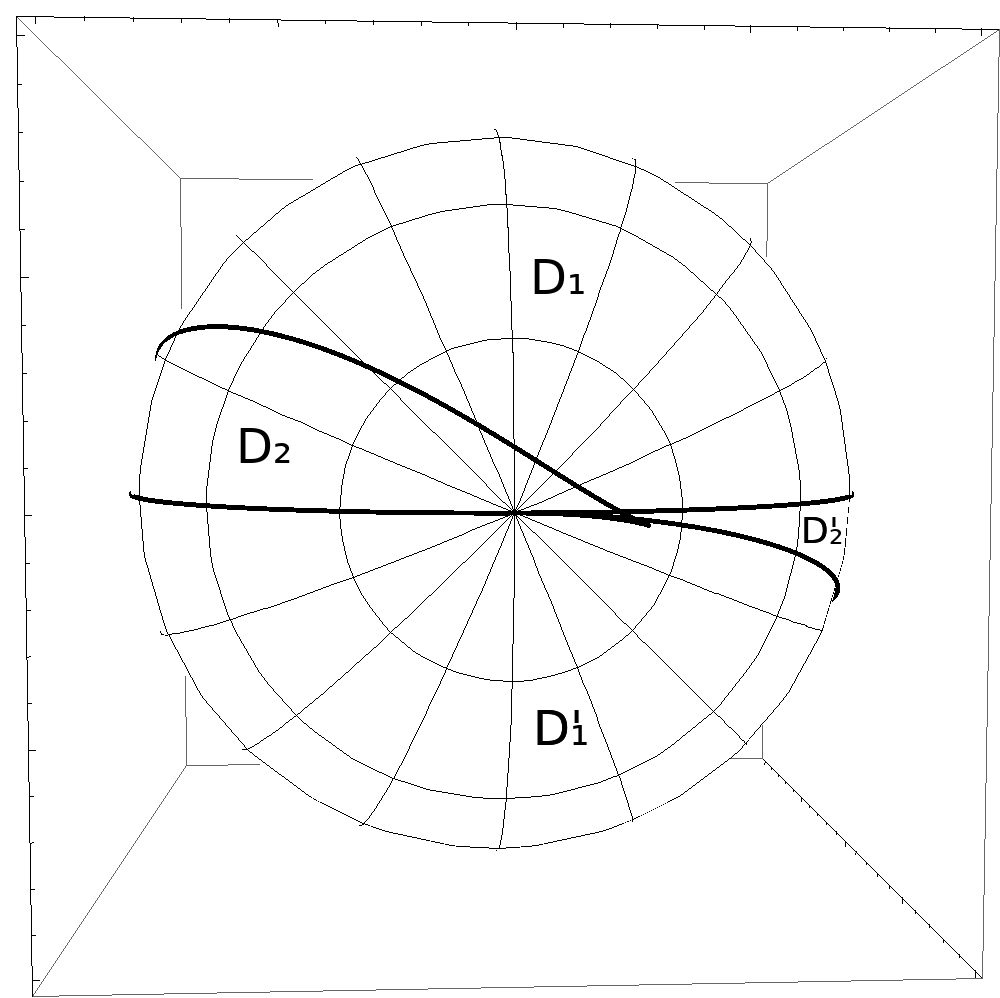}\\
(a)\kern3.8cm(b)
\end{center}
\caption{Bifurcation diagram for unstable case on the
unit sphere in the space of parameters $(\varepsilon_1,
\varepsilon_2,\varepsilon_3)$. 
\label{Fig:bdtwistunst}}
\end{figure}

\subsection{Unstable case}

\begin{figure}[t]
\begin{center}
\includegraphics[width=3.8cm]{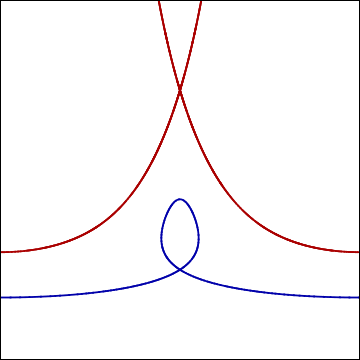}\quad
\includegraphics[width=3.8cm]{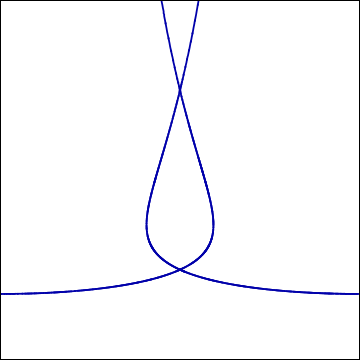}
\quad
\includegraphics[width=3.8cm]{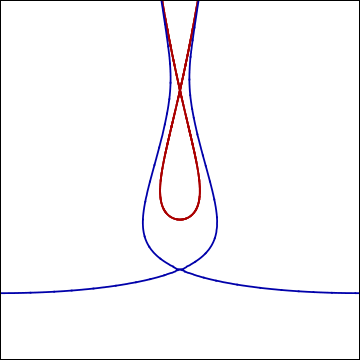}\\
(a)\kern3.8cm(b)\kern3.8cm(c)
\end{center}
\caption{Some critical level sets for $n=8$, $b_0=2$. 
\label{Fig:bdunstln}}
\end{figure}

For $n \le 7$ or $n=8$ and $|b_0|>1$ the bifurcation diagram on the unit sphere in the space of parameters 
$(\varepsilon_1,\varepsilon_2,\varepsilon_3)$ is shown in Figure~\ref{Fig:bdtwistunst}.
In $D_1$ and $D_1'$ typical level sets are the same as in one-parametric families.
In $D_2$ and $D_2'$ typical level sets and their bifurcations  are essentially the same as in 
two-parametric families. In a neighbourhood
of the points $\varepsilon_1=\varepsilon_2=0$,
$\varepsilon_3 = \pm 1$ there are tiny domains
with additional sets of stationary points.
Some possible critical
level sets for this case are shown 
on Figure~\ref{Fig:bdunstln}.




\begin{thebibliography}{GST00}

\bibitem{AKN}
Arnold, V.I., Kozlov, V.V., Neishtadt, A.I.,
{\em Mathematical aspects of classical and celestial mechanics.
Dynamical systems. III.} Third edition.
Encyclopaedia of Mathematical Sciences, 3. Springer-Verlag, Berlin, 2006.

\bibitem{SM}
Siegel, C., Moser, J.  Lectures on celestial mechanics,
Springer, 1971, xii+290 pp. 

\bibitem{Bryuno}
Bryuno, A.D. 
{\em The normal form of a Hamiltonian system.} 
Russian Math. Surveys 43 (1988), no. 1, 25--66.

\bibitem{Kuznetsov}
Kuznetsov, Y.A., {\em Elements of applied bifurcation theory}.
Third edition. Applied Mathematical Sciences, 112. Springer-Verlag,
New York, 2004. 631 pp.

\bibitem{Tak74}
Takens, F., 
{\em Forced Oscillations and Bifurcations, 
Global Analysis of Dynamical Systems}, 
Festschrift dedicated to Floris Takens for his 60th birthday, Bristol: Inst. Physics, 2001, pp. 1--61.
(a reprint of  Communications of the Mathematical Institute Rijksuniversiteit Utrecht, 3 (1974)).

\bibitem{Meyer1970}
Meyer, K.R. {\em Generic bifurcation of periodic points}. Trans. Amer. Math. Soc.  149 (1970) 95--107.

\bibitem{SV2009}
Sim\'o, C., Vieiro, A.
{\em Resonant zones, inner and outer splittings in generic and low order resonances of area preserving maps},
Nonlinearity {\bf 22} no.5 (2009) 1191--1245.

\bibitem{GG2009}
Gelfreich, V., Gelfreikh, N., {\em Unique resonant normal forms for area preserving maps at
an elliptic fixed point,} Nonlinearity 22 no.4 (2009) 783--810

\bibitem{GG2014}
Gelfreich, V., Gelfreikh, N., {\em Unique normal forms near a degenerate elliptic fixed point in two-parametric families of area-preserving maps.} Nonlinearity 27 no.7 (2014) 1645--1667










\bibitem{BGSTT1993}
Bazzani, A., Giovannozzi, M., Servizi, G., Todesco, E., Turchetti, G.
{\em Resonant normal forms, interpolating Hamiltonians
and stability analysis of area preserving maps.} Phys. D 64 (1993), no. 1-3, 66--97.

\end{thebibliography}
\end{document}